\theoremstyle{plain}
\newtheorem{Thm}{Theorem}[section]
\newtheorem{Lem}[Thm]{Lemma}
\newtheorem{Prop}[Thm]{Proposition}
\theoremstyle{definition}
\newtheorem{Def}[Thm]{Definition}
\title[Set-valued maps]{Set-valued maps and some generalized metric spaces}
\author[E.-G. Yang]{Er-Guang YANG}
\address[E.-G. Yang]{ School of Mathematics $\&$ Physics, Anhui University of Technology, Maanshan 243002, P.R. China}
\email{egyang@126.com}
\thanks{}
\keywords{Set-valued maps; Strictly increasing closed cover; $\gamma$-spaces; Stratifiable spaces; Semi-metric spaces; Metric spaces.}
\subjclass[2000]{54C08, 54C60, 54E20, 54E25, 54E35, 54E99}
\date{}
\begin{document}

\maketitle

\begin{abstract}
To give characterizations of monotonically countably paracompact spaces with set-valued maps, Yamazaki \cite{Y} introduced the notion of strictly
increasing closed cover of a topological space with which the boundedness of a set-valued map was defined. In this paper, we show that most of
generalized metric spaces such as stratifiable spaces, semi-metrizable spaces can be characterized with set-valued maps with values into the family
of all closed nonempty subsets of a space which has a strictly increasing closed cover. Moreover, as an application, we use the results obtained to
give characterizations of the corresponding spaces with generalized real-valued functions.
\end{abstract}

\section{Introduction and preliminaries}
Throughout, a space always means a Hausdorff topological space. For a space $X$, we denote by ${\mathcal C}_X$ the family of all compact subsets of
$X$ and $\mathcal S_X=\{\{x\}:x\in X\}$. $\tau$ and $\tau^c$ denote the topology of $X$ and the family of all closed subsets of $X$ respectively.
For a subset $A$ of a space $X$, we write $\overline A$ ($int(A)$) for the closure (interior) of $A$ in $X$. The set of all positive integers is
denoted by $\mathbb{N}$. For a space $Y$, we use $2^Y$ ($\mathcal F(Y)$) to denote the family of all nonempty (nonempty closed) subsets of $Y$.

For a set-valued map $\phi:X\rightarrow 2^Y$ and $B\subset Y$, denote $\phi^{-1}[B]=\{x\in X:\phi(x)\cap B\neq\emptyset\}$ and $\phi^{\#}[B]=\{x\in X
:\phi(x)\subset B\}$. $\phi$ is called lower (upper) semi-continuous if $\phi^{-1}[V]$ ($\phi^{\#}[V]$) is open in $X$ for each open subset $V$ of $Y$.

For maps $\phi,\varphi:X\rightarrow 2^Y$, we express $\phi\subset\varphi$, if $\phi(x)\subset\varphi(x)$ for each $x\in X$.

Set-valued maps are known to be useful in the characterizations of some topological spaces. Many covering properties such as paracompactness \cite{Mi},
strong paracompactness \cite{Gu} and collectionwise normality \cite{Miy} can be characterized with set-valued maps.

To give characterizations of monotonically countably paracompact spaces with set-valued maps, Yamazaki \cite{Y} introduced the notion of strictly
increasing closed cover of a topological space with which the boundedness of a set-valued map was defined.

A sequence $\{B_n\}_{n\in\mathbb{N}}$ of closed subsets of a space $Y$ is called a strictly increasing closed cover of $Y$ if $Y=\bigcup_{n\in\mathbb
{N}}B_n$, $B_1\neq\emptyset$ and $B_n\subsetneq B_{n+1}$ for each $n\in\mathbb{N}$.

Let $X$ be a space and $Y$ a space with a strictly increasing closed cover $\{B_n\}_{n\in\mathbb{N}}$. A set-valued map $\phi:X\rightarrow\mathcal F(
Y)$ is called bounded on a subset $A$ of $X$ if $A\subset\phi^{\#}[B_n]$ for some $n\in\mathbb{N}$. $\phi$ is called locally bounded at a point $x\in
X$ if there exists a neighborhood $O$ of $x$ such that $\phi$ is bounded on $O$. $\phi$ is called locally bounded on a subset $A$ of $X$ if $\phi$ is
locally bounded at each $x\in A$. Denote $\mathcal B(Y;\{B_n\})=\{F\in\mathcal F(Y):F\subset B_n \ {\rm for \ some} \ n\in\mathbb{N}\}$.

By expansions of locally bounded maps $\phi:X\rightarrow\mathcal B(Y;\{B_n\})$, Yamazaki \cite{Y} presented some characterizations of (monotonically)
countably paracompact spaces and (monotonically) countably metacompact spaces. It was also asked that whether stratifiable spaces can be
characterized with set-valued maps along the same lines. In \cite{X}, Xie and Yan gave a positive answer to this question by presenting some
characterizations of stratifiable spaces with set-valued maps.

In this paper, we shall show that besides the spaces mentioned above, many other generalized metric spaces can also be characterized with
set-valued maps with values in $\mathcal F(Y)$, where $Y$ is a space with a strictly increasing closed cover.

Let $X$ be a space and $Y$ a space having a strictly increasing closed cover $\{B_n\}_{n\in\mathbb{N}}$. We write $\mathcal L(X,\mathcal F(Y))$ for
the set of all lower semi-continuous maps from $X$ to $\mathcal F(Y)$. Let $\mathcal A$ be a family of subsets of $X$ and $\mathcal M$ a family of
maps from $X$ to $\mathcal F(Y)$. For a map $\phi:\mathcal A\rightarrow\mathcal M$ and $A\in\mathcal A$, we write $\phi_A$ instead of $\phi(A)$. If
$\{x\}\in\mathcal A$, then we write $\phi_x$ for $\phi_{\{x\}}$. Let $\mathcal A,\mathcal B$ be two families of subsets of $X$ and suppose that
$\mathcal S_X\subset\mathcal A$. Consider the following conditions for a map $\phi:\mathcal A\rightarrow\mathcal M$.

$(G_{\mathcal A})$ For each $A\in\mathcal A$, $A=\bigcap_{n\in\mathbb{N}}\phi_A^{-1}[Y\setminus B_n]$.

$(m_{\mathcal A})$ If $A_1,A_2\in\mathcal A$ and $A_1\subset A_2$, then $\phi_{A_1}\subset\phi_{A_2}$.

$(b_{\mathcal A}(\mathcal B))$ For each $A\in\mathcal A$ and $B\in\mathcal B$, if $A\cap B=\emptyset$, then $\phi_A$ is bounded on $B$.

$(b'_{\mathcal A}(\mathcal B))$ For each $A\in\mathcal A$ and $B\in\mathcal B$, if $A\cap B=\emptyset$, then there exists an open set $V\supset B$
such that $\phi_A$ is bounded on $V$.

$(S)$ For each $x,y\in X$, $\phi_x(y)=\phi_y(x)$.

$(E)$ For each $x,y,z\in X$, $\phi_x(y)\subset\phi_x(z)$ or $\phi_y(z)\subset\phi_x(z)$.

Notice that if a map $\phi:\mathcal A\rightarrow\mathcal M$ satisfies $(G_{\mathcal A})$ (resp., $(m_{\mathcal A})$, $(b_{\mathcal A}(\mathcal B))$,
$(b'_{\mathcal A}(\mathcal B))$) and $\mathcal C\subset\mathcal A$, then the restriction $\phi|_{\mathcal C}:\mathcal C\rightarrow\mathcal M$
satisfies $(G_{\mathcal C})$ (resp., $(m_{\mathcal C})$, $(b_{\mathcal C}(\mathcal B))$, $(b'_{\mathcal C}(\mathcal B))$). In the sequel, for a map
$\phi:\mathcal A\rightarrow\mathcal M$ satisfying $(G_{\mathcal A})$ (and the others) and $\mathcal C\subset\mathcal A$, when we say that $\phi$
satisfies $(G_{\mathcal C})$ (and the others), it will always mean that $\phi|_{\mathcal C}$ satisfies the corresponding conditions.

A $g$-function for a space $X$ is a map $g:\mathbb{N}\times X\rightarrow\tau$ such that for every $x\in X$ and $n\in\mathbb{N}$, $x\in g(n,x)$
and $g(n+1,x)\subset g(n,x)$. For $A\subset X$, $g(n,A)=\cup\{g(n,x): x\in A\}$.

{\bf Convention}: In the following, a space $Y$ always has a strictly increasing closed cover $\{B_n\}_{n\in\mathbb{N}}$.

The following lemma will be frequently used in the sequel.

\begin{Lem}\label{L1}
Let $\mathcal A$ be a family of subsets of a space $X$. Suppose that for each $A\in\mathcal A$, there exists a decreasing sequence $\{G(n,A):n
\in\mathbb{N}\}$ of open subsets of $X$ such that $A=\bigcap_{n\in\mathbb{N}}G(n,A)$. For each $A\in\mathcal A(X)$ and $x\notin A$, let $n_A(x)=\min
\{n\in\mathbb{N}:x\notin G(n,A)\}$. For each $A\in\mathcal A(X)$, define a map $\phi_A:X\rightarrow\mathcal F(Y)$ by letting $\phi_A(x)=Y$ whenever
$x\in A$ and $\phi_A(x)=B_{n_A(x)}$ whenever $x\notin A$. Then $\phi:\mathcal A\rightarrow\mathcal L(X,\mathcal F(Y))$ satisfies $(G_{\mathcal A})$.
If in addition, $G(n,A_1)\subset G(n,A_2)$ for each $n\in\mathbb{N}$ whenever $A_1\subset A_2$, then $\phi$ satisfies $(m_{\mathcal A})$.
\end{Lem}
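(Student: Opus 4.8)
The plan is to verify three things: that each $\phi_A$ really lands in $\mathcal L(X,\mathcal F(Y))$ (it is closed-valued and lower semi-continuous), that $\phi$ satisfies $(G_{\mathcal A})$, and that under the extra hypothesis $\phi$ satisfies $(m_{\mathcal A})$. Before starting I would record three elementary observations. First, since $B_n\subsetneq B_{n+1}\subset Y$, every $B_n$ is a proper closed subset of $Y$, nonempty because $B_1\neq\emptyset$; hence each value $\phi_A(x)$ is a nonempty closed set and $Y\setminus B_n\neq\emptyset$ for all $n$. Second, for $x\notin A$ the index $n_A(x)$ is well defined because $x\notin\bigcap_n G(n,A)$. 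Third, because $\{G(n,A)\}$ decreases, for $x\notin A$ one has $n_A(x)\geq k$ iff $x\in G(k-1,A)$ (with the convention $G(0,A)=X$); equivalently, since $A\subset G(k-1,A)$, the identity $A\cup\{x\notin A:n_A(x)\geq k\}=G(k-1,A)$ holds. This last identity is what drives the openness argument.

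For $(G_{\mathcal A})$ I would unwind $\phi_A^{-1}[Y\setminus B_n]=\{x:\phi_A(x)\not\subset B_n\}$ and split on whether $x\in A$. When $x\in A$ we have $\phi_A(x)=Y\not\subset B_n$ for every $n$ (as $B_n\subsetneq Y$), so $x$ lies in every term of the intersection, giving $A\subset\bigcap_n\phi_A^{-1}[Y\setminus B_n]$. When $x\notin A$ we have $\phi_A(x)=B_{n_A(x)}\subset B_{n_A(x)}$, so $x$ is excluded from the term $n=n_A(x)$; thus such $x$ fail to lie in the intersection. The two cases yield both inclusions and hence $A=\bigcap_n\phi_A^{-1}[Y\setminus B_n]$.

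The main work — and the step I expect to be the real obstacle — is lower semi-continuity, i.e. showing $\phi_A^{-1}[V]$ is open for each open $V\subset Y$. The case $V=\emptyset$ is trivial, so assume $V\neq\emptyset$. Since $Y=\bigcup_n B_n$, the set $N(V)=\{k:B_k\cap V\neq\emptyset\}$ is nonempty, and because $\{B_k\}$ increases it is upward closed; put $k_0=\min N(V)$. Every point of $A$ lies in $\phi_A^{-1}[V]$ because there $\phi_A(x)=Y\supset V$, while a point $x\notin A$ lies in $\phi_A^{-1}[V]$ exactly when $B_{n_A(x)}\cap V\neq\emptyset$, i.e. precisely when $n_A(x)\geq k_0$. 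Combining this with the level-set identity of the first paragraph gives $\phi_A^{-1}[V]=A\cup\{x\notin A:n_A(x)\geq k_0\}=G(k_0-1,A)$, which is open. The delicate points to watch are the reduction to the single threshold $k_0$ (which needs monotonicity of $\{B_k\}$) and the nonemptiness of $N(V)$ (which needs that $\{B_k\}$ covers $Y$); without either the identification with an open set $G(k_0-1,A)$ would fail.

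Finally, for $(m_{\mathcal A})$ under the hypothesis that $G(n,A_1)\subset G(n,A_2)$ for all $n$ whenever $A_1\subset A_2$, I would fix such $A_1\subset A_2$ and a point $x$ and check $\phi_{A_1}(x)\subset\phi_{A_2}(x)$ casewise. If $x\in A_1$ then $x\in A_2$ and both values equal $Y$; if $x\in A_2$ then $\phi_{A_2}(x)=Y$ dominates any value. In the remaining case $x\notin A_2$ (hence $x\notin A_1$), the inclusion $G(n,A_1)\subset G(n,A_2)$ forces $\{n:x\notin G(n,A_2)\}\subset\{n:x\notin G(n,A_1)\}$, whence $n_{A_1}(x)\leq n_{A_2}(x)$ and therefore $B_{n_{A_1}(x)}\subset B_{n_{A_2}(x)}$, which is exactly the desired containment $\phi_{A_1}(x)\subset\phi_{A_2}(x)$.
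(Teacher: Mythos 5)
Your proof is correct, and the verifications of $(G_{\mathcal A})$ and $(m_{\mathcal A})$ coincide with the paper's (same case split on $x\in A$ versus $x\notin A$, same comparison of minima for monotonicity). Where you genuinely differ is in the lower semi-continuity argument, which is the heart of the lemma. The paper argues pointwise: given $x$ with $\phi_A(x)\cap W\neq\emptyset$, it splits into the cases $x\in A$ and $x\notin A$ (the latter subdivided according to whether $n_A(x)=1$) and in each case exhibits an explicit neighborhood of $x$ — namely $G(m,A)$ for some chosen $m$ with $B_m\cap W\neq\emptyset$, or $G(n_A(x)-1,A)$ — on which every value of $\phi_A$ meets $W$. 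You instead compute the preimage globally: using that $\{B_k\}$ is increasing and covers $Y$, you reduce membership of a point $x\notin A$ in $\phi_A^{-1}[V]$ to the single threshold condition $n_A(x)\geq k_0$, where $k_0=\min\{k:B_k\cap V\neq\emptyset\}$, and your level-set identity $A\cup\{x\notin A:n_A(x)\geq k\}=G(k-1,A)$ (with the convention $G(0,A)=X$) then yields $\phi_A^{-1}[V]=G(k_0-1,A)$ exactly. This is the same underlying mechanism — decreasingness of $G(\cdot,A)$ combined with monotonicity of $\{B_k\}$ — but packaged as one exact identity rather than a local two-case argument; it buys a sharper conclusion (the preimage of any nonempty open set is literally one of the sets $G(k,A)$, or $X$, not merely open) and eliminates the case analysis. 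Your preliminary observation that each $B_n$ is a proper, nonempty, closed subset of $Y$, so that $\phi_A$ is indeed $\mathcal F(Y)$-valued and $Y\setminus B_n\neq\emptyset$, makes explicit a point the paper uses silently in the $(G_{\mathcal A})$ step.
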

\begin{proof}

Let $A\in\mathcal A$. To show that $\phi_A$ is lower semi-continuous, let $W$ be an open subset of $Y$, $x\in X$ and suppose that $\phi_A(x)\cap
W\neq\emptyset$.

Case 1. $x\in A$. Then $\phi_A(x)=Y$. Choose $m\in\mathbb{N}$ such that $B_m\cap W\neq\emptyset$. Then $O=G(m,A)$ is an open neighborhood of $x$.
For each $y\in O$, if $y\in A$, then $\phi_A(y)=Y$. If $y\notin A$, then $m<n_A(y)$ and hence $\phi_A(y)=B_{n_A(y)}\supset B_m$. It follows that
$\phi_A(y)\cap W\neq\emptyset$.

Case 2. $x\notin A$. Then $\phi_A(x)=B_{n_A(x)}$. If $n_A(x)=1$, then for each $y\in X$, $\phi_A(y)=Y$ or $\phi_A(y)=B_{n_A(y)}\supset\phi_A(x)$.
If $n_A(x)>1$, then $O=G(n_A(x)-1,A)$ is an open neighborhood of $x$. For each $y\in O$, if $y\in A$, then $\phi_A(y)=Y$. If $y\notin A$, then
$n_A(x)-1<n_A(y)$. Hence $\phi_A(y)=B_{n_A(y)}\supset B_{n_A(x)}=\phi_A(x)$. It follows that $\phi_A(y)\cap W\neq\emptyset$.

By Case 1 and Case 2, we conclude that $\phi_A$ is lower semi-continuous.

By the definition of $\phi_A$, if $x\in A$ then $\phi_A(x)=Y$ from which it follows that $\phi_A(x)\setminus B_n\neq\emptyset$ and thus $x\in\phi_A^
{-1}[Y\setminus B_n]$ for each $n\in\mathbb{N}$. If $x\notin A$ then $\phi_A(x)=B_{n_A(x)}$ from which it follows that $x\notin\phi_A^{-1}[Y\setminus
B_{n_A(x)}]$. This implies that $A=\bigcap_{n\in\mathbb{N}}\phi_A^{-1}[Y\setminus B_n]$.

Now, assume in addition that $A_1\subset A_2$ implies that $G(n,A_1)\subset G(n,A_2)$ for each $n\in \mathbb{N}$. Let $A_1,A_2\in\mathcal A$ with $A_1
\subset A_2$. For each $x\in X$, if $x\in A_2$, then $\phi_{A_2}(x)=Y\supset\phi_{A_1}(x)$. If $x\notin A_2$, then $x\notin A_1$. Since $x\notin G(n_{A_2}
(x),A_2)\supset G(n_{A_2}(x),A_1)$, we have that $n_{A_1}(x)\leq n_{A_2}(x)$ from which it follows that $\phi_{A_1}(x)=B_{n_{A_1}(x)}\subset B_{n_{A_2}(x)
}=\phi_{A_2}(x)$. Therefore, $\phi_{A_1}\subset\phi_{A_2}$. This implies that $\phi$ satisfies $(m_{\mathcal A})$.
\end{proof}

\section{The class of first countable spaces}
In this section, we prove some results for first countable spaces and some spaces contained in the the class of first countable spaces.

\begin{Prop}\label{P1}
A space $X$ is regular first countable if and only if there exists a map $\phi:\mathcal S_X\rightarrow\mathcal L(X,\mathcal F
(Y))$ satisfying $(G_{\mathcal S_X})$ and $(b'_{\mathcal S_X}(\tau^c))$.
\end{Prop}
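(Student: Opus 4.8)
The plan is to prove both implications by translating the conditions on $\phi$ into statements about a canonical decreasing sequence of open neighborhoods. The key observation throughout is that for a lower semi-continuous $\phi_x$ the set $U_n(x):=\phi_x^{-1}[Y\setminus B_n]$ is open (as $Y\setminus B_n$ is open), that these sets decrease in $n$ since $B_n\subset B_{n+1}$, and that $\phi_x^{\#}[B_n]=X\setminus U_n(x)$ is the complementary closed set. Condition $(G_{\mathcal S_X})$ says exactly that $\bigcap_{n}U_n(x)=\{x\}$, and since $x\in U_n(x)$ for every $n$ (because $\phi_x(x)\not\subset B_n$), each $U_n(x)$ is an open neighborhood of $x$.

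For the forward direction I would assume such a $\phi$ exists and show the $U_n(x)$ form a closed-shrinking neighborhood base. Given an open $W\ni x$, put $B=X\setminus W$, a closed set missing $x$; applying $(b'_{\mathcal S_X}(\tau^c))$ yields an open $V\supset B$ and an $m$ with $V\subset\phi_x^{\#}[B_m]=X\setminus U_m(x)$. Hence $U_m(x)\subset X\setminus V$, and since $X\setminus V$ is closed and contained in $X\setminus B=W$, I get $x\in U_m(x)\subset\overline{U_m(x)}\subset W$. This single computation delivers both conclusions at once: varying $W$ shows $\{U_n(x)\}$ is a countable neighborhood base at each $x$ (first countability), while the inclusion $\overline{U_m(x)}\subset W$ is precisely the regularity criterion (closed neighborhoods inside every open neighborhood). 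I expect this step to be the crux, and the main point to appreciate is why the open-set strengthening $(b')$ rather than mere boundedness $(b)$ is what forces \emph{regularity}.

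For the converse I would assume $X$ is regular and first countable and build $\phi$ via Lemma~\ref{L1}. For each $x$ choose a decreasing open neighborhood base $\{V_n(x)\}$ with $\bigcap_n V_n(x)=\{x\}$, and set $G(n,\{x\})=V_n(x)$; Lemma~\ref{L1} then produces a map $\phi:\mathcal S_X\rightarrow\mathcal L(X,\mathcal F(Y))$ satisfying $(G_{\mathcal S_X})$, where $\phi_x(z)=Y$ for $z=x$ and $\phi_x(z)=B_{n_{\{x\}}(z)}$ otherwise. It remains to verify $(b'_{\mathcal S_X}(\tau^c))$. Given a closed $B$ with $x\notin B$, the set $X\setminus B$ is an open neighborhood of $x$, so by regularity there is an open $W\ni x$ with $\overline{W}\subset X\setminus B$, and by the base property $V_m(x)\subset W$ for some $m$, whence $\overline{V_m(x)}\cap B=\emptyset$. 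Then $V:=X\setminus\overline{V_m(x)}$ is an open set containing $B$ with $V\cap V_m(x)=\emptyset$; since $z\notin V_m(x)$ forces $n_{\{x\}}(z)\le m$, every $z\in V$ satisfies $\phi_x(z)\subset B_m$, so $\phi_x$ is bounded on $V$. This establishes $(b'_{\mathcal S_X}(\tau^c))$ and completes the equivalence. Here the decisive use of regularity is exactly the production of a closed neighborhood of $x$ disjoint from $B$, mirroring the role it played in the forward direction.
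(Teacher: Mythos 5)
Your proposal is correct and takes essentially the same route as the paper's proof: the same Lemma~\ref{L1} construction from a decreasing neighborhood base (with regularity producing the open set $V=X\setminus\overline{V_m(x)}\supset B$ disjoint from $V_m(x)$) in one direction, and the sets $\phi_x^{-1}[Y\setminus B_n]$ in the other. The only cosmetic difference is that you extract regularity via the closed-neighborhood criterion $x\in U_m(x)\subset\overline{U_m(x)}\subset W$, whereas the paper exhibits the disjoint open sets $V_m(x)$ and $V$ directly; these are the same computation.
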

\begin{proof}
For each $x\in X$, let $\{V_n(x):n\in\mathbb{N}\}$ be a decreasing open neighborhood base of $x$. For each $x,y\in X$ with $x\neq y$, let $n_x(y)=\min\{n\in
\mathbb{N}:y\notin V_n(x)\}$. For each $x\in X$, define a map $\phi_x:X\rightarrow\mathcal F(Y)$ by letting $\phi_x(x)=Y$ and $\phi_x(y)=B_{n_x(y)}$ whenever
$y\neq x$. By Lemma \ref{L1}, $\phi:\mathcal S_X\rightarrow\mathcal L(X,\mathcal F(Y))$ satisfies $(G_{\mathcal S_X})$.

Suppose that $x\notin F\in\tau^c$. Then there exists an open set $V\supset F$ and $m\in\mathbb{N}$ such that $V\cap V_m(x)=\emptyset$. For each $y
\in V$, $y\notin V_m(x)$ from which it follows that $n_x(y)\leq m$. Hence, $\phi_x(y)=B_{n_x(y)}\subset B_m$. This implies that $\phi_x$ is bounded on $V$.

Conversely, for each $x\in X$ and $n\in\mathbb{N}$, let $V_n(x)=\phi_x^{-1}[Y\setminus B_n]$. Since $\phi_x$ is lower semi-continuous, we
have that $V_n(x)$ is open. By $(G_{\mathcal S_X})$, $x\in\phi_x^{-1}[Y\setminus B_n]=V_n(x)$ for each $n\in\mathbb{N}$.

Suppose that $x\notin F\in\tau^c$. By $(b'_{\mathcal S_X}(\tau^c))$, there exists an open set $V\supset F$ and $m\in\mathbb{N}$ such that $V\subset\phi_x
^{\#}[B_m]$. It follows that $V\cap V_m(x)=\emptyset$. Therefore, $X$ is a regular first countable space.
\end{proof}

An analogous argument proves the following.

\begin{Prop}\label{P2}
A space $X$ is first countable if and only if there exists a map $\phi:\mathcal S_X\rightarrow\mathcal L(X,\mathcal F(Y))$ satisfying $(G_{
\mathcal S_X})$ and $(b_{\mathcal S_X}(\tau^c))$.
\end{Prop}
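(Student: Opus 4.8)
The plan is to mirror the proof of Proposition \ref{P1} almost verbatim, dropping only the single place where regularity was used; this is precisely what distinguishes the weaker condition $(b_{\mathcal S_X}(\tau^c))$ from $(b'_{\mathcal S_X}(\tau^c))$.

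For the necessity direction, I would fix, for each $x\in X$, a decreasing open neighborhood base $\{V_n(x):n\in\mathbb{N}\}$ (first countability), set $n_x(y)=\min\{n:y\notin V_n(x)\}$ for $y\neq x$, and define $\phi_x(x)=Y$, $\phi_x(y)=B_{n_x(y)}$ for $y\neq x$, exactly as in Proposition \ref{P1}. Since the $V_n(x)$ form a decreasing sequence of open sets with $\bigcap_n V_n(x)=\{x\}$ (the space is Hausdorff), Lemma \ref{L1} applies directly and yields that $\phi:\mathcal S_X\to\mathcal L(X,\mathcal F(Y))$ is well defined and satisfies $(G_{\mathcal S_X})$. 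To verify $(b_{\mathcal S_X}(\tau^c))$, suppose $x\notin F$ with $F\in\tau^c$. The key simplification is that I no longer need disjoint open sets separating $x$ from $F$, only boundedness on $F$ itself: since $X\setminus F$ is an open neighborhood of $x$, there is $m\in\mathbb{N}$ with $V_m(x)\subset X\setminus F$, so $F\cap V_m(x)=\emptyset$. Then each $y\in F$ satisfies $y\notin V_m(x)$, hence $n_x(y)\leq m$ and $\phi_x(y)=B_{n_x(y)}\subset B_m$, giving $F\subset\phi_x^{\#}[B_m]$, i.e.\ $\phi_x$ is bounded on $F$.

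For the sufficiency direction, given such a $\phi$, I would set $V_n(x)=\phi_x^{-1}[Y\setminus B_n]$, which is open because $\phi_x$ is lower semi-continuous, and note that $(G_{\mathcal S_X})$ gives $x\in V_n(x)$ for every $n$. It then remains to show $\{V_n(x):n\in\mathbb{N}\}$ is a neighborhood base at $x$. Given any open $U\ni x$, put $F=X\setminus U\in\tau^c$ with $x\notin F$; by $(b_{\mathcal S_X}(\tau^c))$ there is $m$ with $F\subset\phi_x^{\#}[B_m]$, so $\phi_x(y)\subset B_m$ for each $y\in F$, whence $y\notin\phi_x^{-1}[Y\setminus B_m]=V_m(x)$. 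Thus $V_m(x)\cap F=\emptyset$, i.e.\ $V_m(x)\subset U$, proving that $X$ is first countable. There is no genuine obstacle here; the only point requiring attention is structural rather than computational — one must confirm that weakening $(b')$ to $(b)$ removes exactly the open set $V$ that in Proposition \ref{P1} supplied the disjoint separation of $F$ from $x$, so that in the necessity direction one uses only that $\{V_n(x)\}$ is a base (not any separation axiom), and in the sufficiency direction one concludes only the neighborhood-base property and correspondingly only first countability, not regularity.
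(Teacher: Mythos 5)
Your proof is correct and is precisely the ``analogous argument'' the paper intends: it omits the proof of Proposition~\ref{P2}, stating only that it follows the proof of Proposition~\ref{P1}, and your write-up adapts that proof exactly as required, replacing the regularity-based separation by the observation that some $V_m(x)$ misses $F$ (so $\phi_x$ is bounded on $F$ itself), and conversely extracting only the neighborhood-base property from $(b_{\mathcal S_X}(\tau^c))$. Your explicit remark that the Hausdorff convention guarantees $\bigcap_n V_n(x)=\{x\}$, needed to invoke Lemma~\ref{L1}, is a point the paper leaves implicit even in Proposition~\ref{P1}.
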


A space $X$ is called a $\gamma$-space \cite{Ho} if there exists a $g$-function $g$ for $X$ such that if $y_n\in g(n,x)$ and $x_n\in g(n,y_n)$ for all
$n\in\mathbb{N}$, then $x$ is a cluster point of $\langle x_n\rangle$.

\begin{Lem}\label{L2}\cite{Li}
A space $X$ is a $\gamma$-space if and only if there exists a $g$-function $g$ for $X$ such that for each $K\in{\mathcal C}_X$ and $F\in\tau^c$ with $K\cap
F=\emptyset$, there exists $m\in\mathbb{N}$ such that $F\cap g(m,K)=\emptyset$.
\end{Lem}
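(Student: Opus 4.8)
The plan is to prove both implications with the \emph{same} $g$-function in each direction, using the interplay between the double iteration $g(n,g(n,x))$ and compactness as the organizing idea. It is convenient first to isolate the pointwise property $(\ast)$: for each $x\in X$ and each open $U\ni x$ there is $n\in\mathbb{N}$ with $g(n,g(n,x))\subset U$. A short argument shows that, for a given $g$-function, $(\ast)$ is equivalent to the defining $\gamma$-condition. Indeed, if $(\ast)$ fails at some $x$ and $U$, then for every $n$ one can choose $y_n\in g(n,x)$ and $x_n\in g(n,y_n)\setminus U$; the $\gamma$-condition would then force $x$ to be a cluster point of $\langle x_n\rangle$, while the neighborhood $U$ of $x$ contains no $x_n$, a contradiction. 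Conversely, if $(\ast)$ holds and $y_n\in g(n,x)$, $x_n\in g(n,y_n)$, then for open $U\ni x$ pick $n_0$ with $g(n_0,g(n_0,x))\subset U$; for $n\geq n_0$ the monotonicity $g(n+1,x)\subset g(n,x)$ gives $y_n\in g(n_0,x)$ and $x_n\in g(n_0,y_n)\subset g(n_0,g(n_0,x))\subset U$, so $x$ clusters at $\langle x_n\rangle$.

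For the forward direction I would assume $X$ is a $\gamma$-space with $\gamma$-function $g$, so $g$ satisfies $(\ast)$. Given $K\in\mathcal C_X$ and $F\in\tau^c$ with $K\cap F=\emptyset$, apply $(\ast)$ at each $x\in K$ with $U=X\setminus F$ to obtain $n_x$ with $g(n_x,g(n_x,x))\subset X\setminus F$. The open sets $g(n_x,x)$ cover $K$; extracting a finite subcover $K\subset\bigcup_{i=1}^{k}g(n_{x_i},x_i)$ and setting $m=\max_i n_{x_i}$, any $x\in K$ lies in some $g(n_{x_i},x_i)$, whence $g(m,x)\subset g(n_{x_i},x)\subset g(n_{x_i},g(n_{x_i},x_i))\subset X\setminus F$ (the middle inclusion because $x\in g(n_{x_i},x_i)$). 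Thus $g(m,K)\cap F=\emptyset$. The only subtlety is passing from the pointwise output of $(\ast)$ to a single index $m$ valid on all of $K$, which is exactly what compactness supplies.

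For the converse I would assume $g$ has the stated separation property and claim the same $g$ is a $\gamma$-function. Taking $K=\{x\}$ and $F=X\setminus U$ yields $m$ with $g(m,x)\subset U$, so $\{g(n,x):n\in\mathbb{N}\}$ is a decreasing neighborhood base at $x$; in particular $y_n\in g(n,x)$ for all $n$ forces $\langle y_n\rangle\to x$. Now suppose $y_n\in g(n,x)$, $x_n\in g(n,y_n)$, and, for contradiction, that $x$ is not a cluster point of $\langle x_n\rangle$; choose open $U\ni x$ and $N$ with $x_n\notin U$ for $n\geq N$, and set $F=\overline{\{x_n:n\geq N\}}\subset X\setminus U$, a closed set with $x\notin F$.

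The heart of the matter, and the step I expect to be the main obstacle, is that the separation hypothesis directly controls only the first iterate $g(\cdot,x)$, whereas the $\gamma$-condition reaches the second iterate through $x_n\in g(n,y_n)$. The resolving observation is that $\langle y_n\rangle\to x\notin F$ permits discarding an initial segment: choose $N'$ with $y_n\notin F$ for $n\geq N'$ and form the compact set $K=\{x\}\cup\{y_n:n\geq N'\}$, which is disjoint from $F$. Feeding this $K$ into the separation property produces $m$ with $g(m,K)\cap F=\emptyset$; then for any $n\geq\max\{N,N',m\}$ we get $x_n\in g(n,y_n)\subset g(m,y_n)\subset g(m,K)$, contradicting $x_n\in F$. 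Hence $x$ clusters at $\langle x_n\rangle$, so $g$ witnesses that $X$ is a $\gamma$-space. Everything outside this compactness-of-the-tail step is routine bookkeeping with the monotonicity of $g$.
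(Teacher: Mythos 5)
Your proposal is correct, but note that the paper itself offers no proof to compare against: Lemma~\ref{L2} is stated as a citation to \cite{Li} (Lindgren--Fletcher), so your argument is a self-contained substitute for an outsourced result rather than a variant of an in-paper proof. I checked each step. The reformulation $(\ast)$ is genuinely equivalent to the $\gamma$-condition for a fixed $g$: failure of $(\ast)$ at $(x,U)$ produces sequences $y_n\in g(n,x)$, $x_n\in g(n,y_n)\setminus U$ that violate clustering, and conversely $(\ast)$ plus monotonicity of $g$ in $n$ traps the tail of $\langle x_n\rangle$ in any neighborhood. The forward direction is the expected compactness argument: the pointwise indices $n_x$ from $(\ast)$ are made uniform by a finite subcover of $\{g(n_x,x):x\in K\}$, and the chain $g(m,x)\subset g(n_{x_i},x)\subset g(n_{x_i},g(n_{x_i},x_i))\subset X\setminus F$ is valid since $x\in g(n_{x_i},x_i)$ and $m\ge n_{x_i}$. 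The converse contains the one nontrivial idea, and it is sound: applying the separation property to singletons shows $\{g(n,x):n\in\mathbb{N}\}$ is a neighborhood base at $x$, hence $y_n\to x$; then the set $K=\{x\}\cup\{y_n:n\ge N'\}$ is compact (a convergent sequence with its limit), is disjoint from $F=\overline{\{x_n:n\ge N\}}\subset X\setminus U$ after discarding an initial segment, and feeding it into the hypothesis yields $m$ with $g(m,K)\cap F=\emptyset$, contradicting $x_n\in g(n,y_n)\subset g(m,y_n)\subset g(m,K)$ for large $n$. This tail-compactness trick is exactly what bridges the gap between the hypothesis (which controls only first iterates $g(m,\cdot)$ over compact sets) and the $\gamma$-condition (which involves the second iterate), and what your argument buys is that the paper's reliance on \cite{Li} could be removed entirely at the cost of roughly a page.
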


\begin{Thm}\label{T1}
A space $X$ is a regular $\gamma$-space if and only if there exists a map $\phi:\mathcal C_X\rightarrow\mathcal L(X,\mathcal F(Y))$ satisfying $(G_
{\mathcal C_X})$, $(m_{\mathcal C_X})$ and $(b'_{\mathcal C_X}(\tau^c))$.
\end{Thm}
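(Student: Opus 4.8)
The plan is to prove both directions by translating between a $g$-function witnessing the $\gamma$-space property (in the form given by Lemma~\ref{L2}) and a set-valued map satisfying $(G_{\mathcal C_X})$, $(m_{\mathcal C_X})$ and $(b'_{\mathcal C_X}(\tau^c))$. The key structural fact I will lean on is that Lemma~\ref{L2} recasts the $\gamma$-space condition in terms of compact sets $K$ and closed sets $F$ with $K\cap F=\emptyset$, which matches exactly the domain $\mathcal C_X$ and the codomain indexing of conditions $(b'_{\mathcal C_X}(\tau^c))$ and $(G_{\mathcal C_X})$. So the natural strategy is to build the map $\phi$ directly from the $g$-function via Lemma~\ref{L1}, and conversely to extract a $g$-function from $\phi$.

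For the forward direction I would start from a $g$-function $g$ as in Lemma~\ref{L2}, and for each $K\in\mathcal C_X$ set $G(n,K)=g(n,K)=\bigcup\{g(n,x):x\in K\}$. Since $g(n+1,x)\subset g(n,x)$, the sets $G(n,K)$ are decreasing open sets containing $K$; the point I must verify is $K=\bigcap_{n}G(n,K)$, which uses regularity together with the $g$-function property to separate points outside $K$. Monotonicity $G(n,K_1)\subset G(n,K_2)$ for $K_1\subset K_2$ is immediate from the union definition. Lemma~\ref{L1} then hands me a map $\phi:\mathcal C_X\rightarrow\mathcal L(X,\mathcal F(Y))$ satisfying $(G_{\mathcal C_X})$ and $(m_{\mathcal C_X})$ for free. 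To get $(b'_{\mathcal C_X}(\tau^c))$, take $K\in\mathcal C_X$ and $F\in\tau^c$ with $K\cap F=\emptyset$; Lemma~\ref{L2} gives $m$ with $F\cap g(m,K)=\emptyset$, so $V=X\setminus\overline{g(m+1,K)}$ (or a similar open set) contains $F$, and on $V$ every point $y$ satisfies $y\notin G(m,K)$, forcing $n_K(y)\le m$ and hence $\phi_K(y)=B_{n_K(y)}\subset B_m$, i.e.\ $\phi_K$ is bounded on an open set $V\supset F$.

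For the converse I would recover a $g$-function by setting $g(n,x)=\phi_x^{-1}[Y\setminus B_n]$, which is open by lower semi-continuity and contains $x$ by $(G_{\mathcal C_X})$ applied to singletons; since $B_n\subset B_{n+1}$ gives $Y\setminus B_{n+1}\subset Y\setminus B_n$, the nesting $g(n+1,x)\subset g(n,x)$ holds, so $g$ is a genuine $g$-function. The task is then to verify the Lemma~\ref{L2} criterion: given $K\in\mathcal C_X$ and $F\in\tau^c$ disjoint from $K$, apply $(b'_{\mathcal C_X}(\tau^c))$ to get an open $V\supset F$ with $\phi_K$ bounded on $V$, say $V\subset\phi_K^{\#}[B_m]$. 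I then need to relate $\phi_K$ back to the pointwise values $g(m,x)$; here $(m_{\mathcal C_X})$ is the crucial ingredient, since $\{x\}\subset K$ gives $\phi_x\subset\phi_K$, hence $\phi_x(y)\subset\phi_K(y)\subset B_m$ for $y\in V$, which should yield $V\cap g(m,K)=\emptyset$ and therefore $F\cap g(m,K)=\emptyset$. Regularity of $X$ must be checked separately, presumably extracted from the boundedness condition applied to singleton closed complements, paralleling the argument in Proposition~\ref{P1}.

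The main obstacle I anticipate is the interplay between the compact set $K$ and the individual points $x\in K$ in the converse direction: the boundedness condition $(b'_{\mathcal C_X}(\tau^c))$ is stated for $\phi_K$, but Lemma~\ref{L2} requires controlling $g(m,K)=\bigcup_{x\in K}g(m,x)$, which is built from the \emph{pointwise} maps $\phi_x$. Bridging this gap is exactly what $(m_{\mathcal C_X})$ is designed for, and getting the quantifier on $m$ uniform over all $x\in K$ (rather than depending on $x$) is the delicate point — it is precisely monotonicity that upgrades the pointwise singleton maps to uniform control by the compact-set map $\phi_K$, so I expect the heart of the proof to be the careful verification that $V\subset\phi_K^{\#}[B_m]$ transfers to a single $m$ working simultaneously across all of $K$.
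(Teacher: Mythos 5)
Your converse direction is sound and is essentially the paper's argument: set $g(n,x)=\phi_x^{-1}[Y\setminus B_n]$, use $(m_{\mathcal C_X})$ to transfer the single bound $m$ for $\phi_K$ on $V$ to every $\phi_x$ with $x\in K$ (so $V\cap g(m,K)=\emptyset$), and get regularity from the case $K=\{x\}$ exactly as in Proposition~\ref{P1}. The genuine gap is in the forward direction, in the verification of $(b'_{\mathcal C_X}(\tau^c))$. You apply Lemma~\ref{L2} directly to the pair $(K,F)$ to obtain $F\cap g(m,K)=\emptyset$, and then claim that $V=X\setminus\overline{g(m+1,K)}$ (``or a similar open set'') contains $F$. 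That inference fails: disjointness of $F$ from $g(m,K)$ gives no control over the \emph{closure} of $g(m+1,K)$, whose boundary points may lie in $F$. Concretely, on $X=\mathbb R$ take the $g$-function with $g(1,0)=(-1,1)$, $g(2,0)=(-\tfrac12,\tfrac12)\cup(\tfrac34,1)$, $g(n,0)=(-\tfrac1n,\tfrac1n)$ for $n\geq 3$, and $g(n,x)=(x-\tfrac1n,x+\tfrac1n)$ for $x\neq 0$; this satisfies the condition of Lemma~\ref{L2} (for disjoint compact $K'$ and closed $F'$ pick $m\geq 3$ with $\tfrac1m<d(K',F')$), yet for $K=\{0\}$, $F=\{1\}$ the value $m=1$ already gives $F\cap g(1,K)=\emptyset$ while $1\in\overline{g(2,K)}$, so your $V$ fails to contain $F$. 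Note also that for the map $\phi_K$ built via Lemma~\ref{L1}, condition $(b'_{\mathcal C_X}(\tau^c))$ is \emph{equivalent} to the existence of $k$ with $F\cap\overline{g(k,K)}=\emptyset$, so what you left vague is exactly the whole content of the step.

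The missing idea is that regularity must be consumed precisely here, not in checking $K=\bigcap_n g(n,K)$ (which needs only Hausdorffness plus Lemma~\ref{L2} applied to singletons). The paper's argument: by regularity (and compactness of $K$) choose disjoint open sets $U\supset K$ and $V\supset F$; apply Lemma~\ref{L2} to $K$ and the closed set $X\setminus U$ to get $m$ with $g(m,K)\subset U$; then $V\cap g(m,K)=\emptyset$, hence $n_K(y)\leq m$ and $\phi_K(y)\subset B_m$ for every $y\in V$, i.e.\ $\phi_K$ is bounded on the open set $V\supset F$. This is unavoidable in substance: condition $(b'_{\mathcal C_X}(\tau^c))$, as opposed to $(b_{\mathcal C_X}(\tau^c))$ in Theorem~\ref{T2}, is exactly where regularity is encoded, so any proof of the forward direction that never invokes regularity at this point cannot be correct. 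Your remaining steps (the use of Lemma~\ref{L1}, monotonicity of $g(n,\cdot)$, and the whole converse) go through once this step is repaired as above.
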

\begin{proof}
Let $g$ be the $g$-function in Lemma \ref{L2}. It is easy to see that for each $K\in{\mathcal C}_X$, $K=\bigcap_{n\in\mathbb{N}}g(n,K)$. For each $K\in{
\mathcal C}_X$ and $x\notin K$, let $n_K(x)=\min\{n\in\mathbb{N}:x\notin g(n,K)\}$. For each $K\in{\mathcal C}_X$, define a map $\phi_K:X\rightarrow\mathcal
F(Y)$ by letting $\phi_K(x)=Y$ whenever $x\in K$ and $\phi_K(x)=B_{n_K(x)}$ whenever $x\notin K$. By Lemma \ref{L1}, $\phi:\mathcal C_X\rightarrow\mathcal L
(X,\mathcal F(Y))$ satisfies $(G_{\mathcal C_X})$ and $(m_{\mathcal C_X})$.

Suppose that $K\in{\mathcal C}_X$, $F\in\tau^c$ and $K\cap F=\emptyset$. Since $X$ is regular, there exist open subsets $U,V$ of $X$ such that $K\subset U$,
$F\subset V$ and $V\cap U=\emptyset$. By Lemma \ref{L2}, there is $m\in\mathbb{N}$ such that $g(m,K)\subset U$ and thus $V\cap g(m,K)=\emptyset$. For each $x
\in V$, $x\notin g(m,K)$ from which it follows that $n_K(x)\leq m$ and thus $\phi_K(x)=B_{n_K(x)}\subset B_m$. This implies that $\phi_K$ is bounded on $V$.

Conversely, for each $x\in X$ and $n\in\mathbb{N}$, let $g(n,x)=\phi_x^{-1}[Y\setminus B_n]$. Since $\phi_x$ is lower semi-continuous, $g(n,x)$ is open. By $(
G_{\mathcal C_X})$, $x\in g(n,x)$. It is clear that $g(n+1,x)\subset g(n,x)$. Thus $g$ is a $g$-function for $X$. Let $K\in{\mathcal C}_X$, $F\in\tau^c$ and $K
\cap F=\emptyset$. By $(b'_{\mathcal C_X}(\tau^c))$, there exists an open set $V\supset F$ and $m\in\mathbb{N}$ such that $V\subset\phi_K^{\#}[B_m]$. By $(m_{
\mathcal C_X})$, $V\subset\phi_x^{\#}[B_m]$ for each $x\in K$. It follows that $V\cap g(m,x)=\emptyset$ for each $x\in K$ and hence $V\cap g(m,K)=\emptyset$.
This implies that $X$ is regular. By Lemma \ref{L2}, $X$ is a $\gamma$-space.
\end{proof}

Similarly, we have the following.

\begin{Thm}\label{T2}
A space $X$ is a $\gamma$-space if and only if there exists a map $\phi:\mathcal C_X\rightarrow\mathcal L(X,\mathcal F(Y))$ satisfying $(G_{\mathcal
C_X})$, $(m_{\mathcal C_X})$ and $(b_{\mathcal C_X}(\tau^c))$.
\end{Thm}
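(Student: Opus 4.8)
The plan is to imitate the proof of Theorem~\ref{T1}, modifying only the two places where regularity was invoked, since the present statement differs from Theorem~\ref{T1} by replacing $(b'_{\mathcal C_X}(\tau^c))$ with the weaker $(b_{\mathcal C_X}(\tau^c))$, exactly as Proposition~\ref{P2} differs from Proposition~\ref{P1}. For the forward direction I would again take the $g$-function supplied by Lemma~\ref{L2}, note that $K=\bigcap_{n}g(n,K)$ for each compact $K$, and define $\phi_K$ exactly as before via $n_K(x)=\min\{n:x\notin g(n,K)\}$. Lemma~\ref{L1} then delivers $(G_{\mathcal C_X})$ and $(m_{\mathcal C_X})$ at no extra cost. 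The one change is in verifying the boundedness condition: given $K\in\mathcal C_X$ and $F\in\tau^c$ with $K\cap F=\emptyset$, I would apply Lemma~\ref{L2} directly to get $m\in\mathbb N$ with $F\cap g(m,K)=\emptyset$, so that for each $x\in F$ we have $x\notin g(m,K)$, whence $n_K(x)\le m$ and $\phi_K(x)=B_{n_K(x)}\subset B_m$; this shows $\phi_K$ is bounded on $F$ itself, which is precisely $(b_{\mathcal C_X}(\tau^c))$. Here we no longer need the separating open sets $U,V$ that regularity provided in Theorem~\ref{T1}, so the hypothesis that $X$ be regular is simply dropped.

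For the converse, I would proceed verbatim as in Theorem~\ref{T1}: define $g(n,x)=\phi_x^{-1}[Y\setminus B_n]$, which is open by lower semi-continuity, satisfies $x\in g(n,x)$ by $(G_{\mathcal C_X})$, and is decreasing in $n$ since the $B_n$ are increasing; thus $g$ is a $g$-function. Given $K\in\mathcal C_X$ and $F\in\tau^c$ disjoint from $K$, condition $(b_{\mathcal C_X}(\tau^c))$ now yields directly some $m$ with $F\subset\phi_K^{\#}[B_m]$, and $(m_{\mathcal C_X})$ propagates this to $F\subset\phi_x^{\#}[B_m]$ for every $x\in K$, giving $F\cap g(m,x)=\emptyset$ for each such $x$ and hence $F\cap g(m,K)=\emptyset$. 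By Lemma~\ref{L2} this makes $X$ a $\gamma$-space, and since regularity is no longer asserted in the conclusion, no further separation argument is required.

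The substance of the proof is genuinely the same as Theorem~\ref{T1}; there is no real obstacle, which is presumably why the author signals it with ``Similarly.'' The only point that needs a moment's care is bookkeeping the distinction between $(b)$ and $(b')$: in the forward direction $(b)$ asks for boundedness on $F$ directly rather than on an open superset, and it is exactly the absence of regularity that forces us to settle for (and allows us to obtain) the weaker conclusion, since without an open set $U\supset K$ with $\overline U\cap F=\emptyset$ we cannot inflate $F$ to an open neighborhood on which $\phi_K$ stays bounded. Correspondingly, in the converse the stronger hypothesis that previously produced an open $V\supset F$ is replaced by the bare disjointness furnished by $(b)$, which is all that Lemma~\ref{L2} requires.
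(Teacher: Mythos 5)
Your proposal is correct and is exactly the argument the paper intends by ``Similarly'': the forward direction applies Lemma~\ref{L2} directly to get $F\cap g(m,K)=\emptyset$ (no regularity, no open superset of $F$), and the converse runs Theorem~\ref{T1}'s argument with $(b_{\mathcal C_X}(\tau^c))$ in place of $(b'_{\mathcal C_X}(\tau^c))$, dropping the regularity conclusion. No gaps; your bookkeeping of the $(b)$ versus $(b')$ distinction matches the paper's treatment of Propositions~\ref{P1} and~\ref{P2}.
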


A space $X$ is called strongly first countable \cite{Ho} if there exists a $g$-function $g$ for $X$ such that for each $x\in X$, $\{g(n,x):n\in\mathbb{N}\}$
is a neighborhood base of $x$ and if $y\in g(n,x)$, then $g(n,y)\subset g(n,x)$.

\begin{Thm}\label{T3}
A space $X$ is strongly first countable if and only if there exists a map $\phi:\mathcal S_X\rightarrow\mathcal L(X,\mathcal F(Y))$ satisfying $(
G_{\mathcal S_X})$, $(E)$ and $(b_{\mathcal S_X}(\tau^c))$.
\end{Thm}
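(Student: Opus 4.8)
The plan is to follow the template already established in Propositions \ref{P1}, \ref{P2} and Theorems \ref{T1}, \ref{T2}: in each direction I pass between the $g$-function witnessing strong first countability and the map $\phi$ through the dictionary $g(n,x)=\phi_x^{-1}[Y\setminus B_n]$, under which $(G_{\mathcal S_X})$ encodes ``$x\in g(n,x)$ with $\bigcap_n g(n,x)=\{x\}$'', $(b_{\mathcal S_X}(\tau^c))$ encodes ``$\{g(n,x)\}_n$ is a neighborhood base at $x$'', and the new ingredient $(E)$ encodes the strong axiom ``$y\in g(n,x)\Rightarrow g(n,y)\subset g(n,x)$''. For the forward direction, I would start from a $g$-function $g$ witnessing strong first countability. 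Since $X$ is Hausdorff and $\{g(n,x)\}_n$ is a neighborhood base, $\{x\}=\bigcap_n g(n,x)$, so Lemma \ref{L1} (with $\mathcal A=\mathcal S_X$, $G(n,\{x\})=g(n,x)$) yields $\phi:\mathcal S_X\rightarrow\mathcal L(X,\mathcal F(Y))$ satisfying $(G_{\mathcal S_X})$, where $\phi_x(x)=Y$ and $\phi_x(y)=B_{n_x(y)}$ with $n_x(y)=\min\{n:y\notin g(n,x)\}$. Condition $(b_{\mathcal S_X}(\tau^c))$ is checked exactly as in Proposition \ref{P2}: given closed $F$ with $x\notin F$, the neighborhood base property gives $m$ with $g(m,x)\cap F=\emptyset$, so every $y\in F$ has $n_x(y)\le m$, i.e. $F\subset\phi_x^{\#}[B_m]$.

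The only genuinely new point in the forward direction is $(E)$. Because $\{B_n\}$ is strictly increasing, $B_a\subset B_b\iff a\le b$, so for distinct $x,y,z$ the condition $(E)$ reads $\min\{n_x(y),n_y(z)\}\le n_x(z)$ (and it is trivial when two of the three points coincide, since then one of $\phi_x(y),\phi_x(z),\phi_y(z)$ equals $Y$). Setting $n=\min\{n_x(y),n_y(z)\}-1$: if $n\ge 1$ then $y\in g(n,x)$ and $z\in g(n,y)$, so the strong axiom gives $g(n,y)\subset g(n,x)$, whence $z\in g(n,x)$ and $n_x(z)>n$, which is precisely $\min\{n_x(y),n_y(z)\}\le n_x(z)$; the case $n=0$ is immediate since $n_x(z)\ge 1$. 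This establishes $(E)$.

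For the converse, I would set $g(n,x)=\phi_x^{-1}[Y\setminus B_n]$. Lower semi-continuity makes each $g(n,x)$ open, $(G_{\mathcal S_X})$ gives $x\in g(n,x)$, and $B_n\subset B_{n+1}$ gives $g(n+1,x)\subset g(n,x)$, so $g$ is a $g$-function; the neighborhood base property of $\{g(n,x)\}_n$ follows from $(b_{\mathcal S_X}(\tau^c))$ as in Proposition \ref{P2}. The strong axiom is then read off from $(E)$ in its raw containment form: suppose $y\in g(n,x)$, i.e. $\phi_x(y)\not\subset B_n$, and take $z\in g(n,y)$, i.e. $\phi_y(z)\not\subset B_n$. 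By $(E)$ either $\phi_x(y)\subset\phi_x(z)$ or $\phi_y(z)\subset\phi_x(z)$; in the first case $\phi_x(y)\not\subset B_n$ forces $\phi_x(z)\not\subset B_n$, and in the second case $\phi_y(z)\not\subset B_n$ forces the same conclusion. Hence $\phi_x(z)\not\subset B_n$, that is $z\in g(n,x)$, so $g(n,y)\subset g(n,x)$, completing the proof that $X$ is strongly first countable.

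I expect the main obstacle to be recognizing the exact correspondence between the abstract condition $(E)$ and the strong axiom $y\in g(n,x)\Rightarrow g(n,y)\subset g(n,x)$; everything else is parallel to the earlier results. Once that correspondence is seen, the converse direction reduces to a one-line monotonicity-of-inclusion argument, and the forward direction requires only the minor bookkeeping of rewriting $(E)$ as the index inequality $\min\{n_x(y),n_y(z)\}\le n_x(z)$.
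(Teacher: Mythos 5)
Your proposal is correct and follows essentially the same route as the paper: the same dictionary $g(n,x)=\phi_x^{-1}[Y\setminus B_n]$, the same construction $\phi_x(y)=B_{n_x(y)}$ via the decreasing neighborhoods $g(n,x)$, and the same identification of $(E)$ with the axiom $y\in g(n,x)\Rightarrow g(n,y)\subset g(n,x)$. The only cosmetic difference is that you verify $(E)$ directly through the index inequality $\min\{n_x(y),n_y(z)\}\le n_x(z)$, whereas the paper runs the identical argument in contrapositive form (assuming both inclusions fail and deriving $z\in g(n_x(z),x)$, a contradiction).
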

\begin{proof} Let $g$ be the $g$-function for a strongly first countable space. For each $x,y\in X$ with $x\neq y$, let $n_x(y)=\min\{n\in\mathbb{N}:y\notin
g(n,x)\}$. For each $x\in X$, define a map $\phi_x:X\rightarrow\mathcal F(Y)$ by letting $\phi_x(x)=Y$ and $\phi_x(y)=B_{n_x(y)}$ whenever $y\neq x$. Then $
\phi:\mathcal S_X\rightarrow\mathcal L(X,\mathcal F(Y))$ satisfies $(G_{\mathcal S_X})$ and $(b_{\mathcal S_X}(\tau^c))$.

Let $x,y,z\in X$. We may assume that $x,y,z$ are pairwise distinct. Assume that $\phi_x(y)\nsubseteq\phi_x(z)$ and $\phi_y(z)\nsubseteq\phi_x(z)$. Then $n_x
(z)<n_x(y)$ and $n_x(z)<n_y(z)$. Thus $y\in g(n_x(z),x)$ and $z\in g(n_x(z),y)$ from which it follows that $z\in g(n_x(z),x)$, a contradiction.

Conversely, define a $g$-function for $X$ by letting $g(n,x)=\phi_x^{-1}[Y\setminus B_n]$ for each $x\in X$ and $n\in\mathbb{N}$. From $(b_{\mathcal S_X}(
\tau^c))$ it follows that $\{g(n,x):n\in\mathbb{N}\}$ is a neighborhood base of $x$. Suppose that $y\in g(n,x)$ and $z\in g(n,y)$. Then $\phi_x(y)\setminus
B_n\neq\emptyset$ and $\phi_y(z)\setminus B_n\neq\emptyset$. By $(E)$, $\phi_x(z)\setminus B_n\neq\emptyset$ from which it follows that $z\in\phi_x^
{-1}[Y\setminus B_n]=g(n,x)$. This implies that $g(n,y)\subset g(n,x)$. Therefore, $X$ is strongly first countable.
\end{proof}

\section{The class of semi-stratifiable spaces}
This section is devoted to the characterizations of some spaces contained in the class of semi-stratifiable spaces.

\begin{Def}\label{d1}
A space $X$ is called stratifiable \cite{B} (semi-stratifiable \cite{C}) if there is a map $\rho:\mathbb{N}\times\tau^c\rightarrow
\tau$ such that

(1) $F=\bigcap_{n\in\mathbb{N}}\rho(n,F)=\bigcap_{n\in\mathbb{N}}\overline{\rho(n,F)}$ ($F=\bigcap_{n\in\mathbb{N}}\rho(n,F)$) for each
$F\in\tau^c$;

(2) if $F,H\in\tau^c$ and $F\subset H$, then $\rho(n,F)\subset\rho(n,H)$ for all $n\in \mathbb{N}$.

$X$ is called $k$-semi-stratifiable \cite{Lu} if there is a map $\rho:\mathbb{N}\times\tau^c\rightarrow\tau$ satisfies the conditions for a
semi-stratifiable space and

(3) for each $K\in{\mathcal C}_X$ and $F\in\tau^c$ with $K\cap F=\emptyset$, there is $m\in\mathbb{N}$ such that $K\cap\rho(m,F)=\emptyset$.
\end{Def}

The map $\rho$ in the above definition is called the stratification, semi-stratification, $k$-semi-stratification for $X$ respectively. Notice
that, without loss of generality, we may assume that $\rho$ is decreasing with respect to $n$.

\begin{Thm}\label{T4}
A space $X$ is $k$-semi-stratifiable if and only if there exists a map $\phi:\tau^c\rightarrow\mathcal L(X,\mathcal F(Y))$ satisfying $(G_{
\tau^c})$, $(m_{\tau^c})$ and $(b_{\tau^c}(\mathcal C_X))$.
\end{Thm}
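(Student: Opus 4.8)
The plan is to follow the now-familiar two-direction scheme used in the previous theorems, replacing the $g$-function machinery by the $k$-semi-stratification $\rho$. For the forward direction, I would start from a $k$-semi-stratification $\rho:\mathbb{N}\times\tau^c\to\tau$, which (by the remark after Definition \ref{d1}) may be taken decreasing in $n$. Since $\rho$ satisfies $F=\bigcap_{n\in\mathbb{N}}\rho(n,F)$ and the monotonicity condition $F\subset H\Rightarrow\rho(n,F)\subset\rho(n,H)$, I would set $G(n,F)=\rho(n,F)$ and apply Lemma \ref{L1} with the index family $\mathcal A=\tau^c$. This immediately yields a map $\phi:\tau^c\to\mathcal L(X,\mathcal F(Y))$ satisfying both $(G_{\tau^c})$ and $(m_{\tau^c})$, where $\phi_F(x)=Y$ for $x\in F$ and $\phi_F(x)=B_{n_F(x)}$ with $n_F(x)=\min\{n:x\notin\rho(n,F)\}$ for $x\notin F$.

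The genuinely new content is verifying $(b_{\tau^c}(\mathcal C_X))$. Here I would take $F\in\tau^c$ and $K\in\mathcal C_X$ with $F\cap K=\emptyset$, and invoke condition (3) of $k$-semi-stratifiability to get $m\in\mathbb{N}$ with $K\cap\rho(m,F)=\emptyset$. Then every $x\in K$ satisfies $x\notin\rho(m,F)$, so $n_F(x)\le m$ and hence $\phi_F(x)=B_{n_F(x)}\subset B_m$, which says precisely $K\subset\phi_F^{\#}[B_m]$, i.e. $\phi_F$ is bounded on $K$. Note that this is the analogue of the step in Theorem \ref{T1}, except that here we need no regularity and no separating open sets: condition (3) directly separates $K$ from $F$ via $\rho$, which is exactly why the boundedness is the plain $(b_{\tau^c}(\mathcal C_X))$ rather than the primed version $(b'_{\tau^c}(\mathcal C_X))$.

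For the converse, given $\phi:\tau^c\to\mathcal L(X,\mathcal F(Y))$ satisfying the three conditions, I would define $\rho(n,F)=\phi_F^{-1}[Y\setminus B_n]$. Lower semi-continuity of $\phi_F$ makes each $\rho(n,F)$ open, and $(G_{\tau^c})$ gives $F=\bigcap_{n\in\mathbb{N}}\rho(n,F)$, establishing the semi-stratification identity (1). Monotonicity condition (2), $\rho(n,F)\subset\rho(n,H)$ for $F\subset H$, follows from $(m_{\tau^c})$: if $\phi_F\subset\phi_H$ then $\phi_F(x)\setminus B_n\neq\emptyset$ implies $\phi_H(x)\setminus B_n\neq\emptyset$. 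Finally, for condition (3), I would take $K\in\mathcal C_X$, $F\in\tau^c$ with $K\cap F=\emptyset$ and apply $(b_{\tau^c}(\mathcal C_X))$ to obtain $m$ with $K\subset\phi_F^{\#}[B_m]$; then for $x\in K$ we have $\phi_F(x)\subset B_m$, so $x\notin\phi_F^{-1}[Y\setminus B_m]=\rho(m,F)$, giving $K\cap\rho(m,F)=\emptyset$.

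The main obstacle I anticipate is not any single step—each is a routine translation—but rather making sure the decreasing-in-$n$ normalization of $\rho$ is legitimately available in the forward direction (it is, by the stated remark) so that Lemma \ref{L1} applies verbatim, and conversely checking that the $\rho$ built from $\phi$ is automatically decreasing in $n$ (it is, since $B_n\subsetneq B_{n+1}$ forces $\phi_F^{-1}[Y\setminus B_{n+1}]\subset\phi_F^{-1}[Y\setminus B_n]$, i.e. $\rho(n+1,F)\subset\rho(n,F)$). Once that bookkeeping is in place, the proof is a clean structural correspondence between the semi-stratification axioms and the conditions $(G)$, $(m)$, $(b(\mathcal C_X))$.
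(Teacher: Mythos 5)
Your proposal is correct and follows essentially the same route as the paper's own proof: both directions use the identical constructions (Lemma \ref{L1} applied to $G(n,F)=\rho(n,F)$ in the forward direction, and $\rho(n,F)=\phi_F^{-1}[Y\setminus B_n]$ in the converse), with the same verification that condition (3) of $k$-semi-stratifiability corresponds exactly to $(b_{\tau^c}(\mathcal C_X))$. Your extra bookkeeping remarks (the decreasing-in-$n$ normalization, and the automatic monotonicity of the reconstructed $\rho$) are accurate and harmless, though the paper does not need the latter since Definition \ref{d1} does not demand that $\rho$ be decreasing.
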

\begin{proof}
Let $\rho$ be the $k$-semi-stratification for $X$ which is decreasing with respect to $n$. For each $F\in\tau^c$ and $x\notin F$, let $n_F(x)=\min\{n\in\mathbb
{N}:x\notin\rho(n,F)\}$. For each $F\in\tau^c$, define a map $\phi_F:X\rightarrow\mathcal F(Y)$ by letting $\phi_F(x)=Y$ whenever $x\in F$ and $\phi_F(x)=B
_{n_F(x)}$ whenever $x\notin F$. By Lemma \ref{L1}, $\phi:\tau^c\rightarrow\mathcal L(X,\mathcal F(Y))$ satisfies $(G_{\tau^c})$ and $(m_{\tau^c})$.

Let $F\in\tau^c$, $K\in{\mathcal C}_X$ and $F\cap K=\emptyset$. Then $K\cap\rho(m,F)=\emptyset$ for some $m\in\mathbb{N}$. For each $x\in K$, $x\notin\rho(m,F)$
from which it follows that $n_F(x)\leq m$. Hence, $\phi_F(x)=B_{n_F(x)}\subset B_m$. This implies that $\phi_F$ is bounded on $K$.

Conversely, for each $F\in\tau^c$ and $n\in\mathbb{N}$, let $\rho(n,F)=\phi_F^{-1}[Y\setminus B_n]$. Then $\rho(n,F)\in\tau$.

By $(m_{\tau^c})$, it is clear that if $F,H\in\tau^c$ and $F\subset H$, then $\rho(n,F)\subset\rho(n,H)$.

Let $F\in\tau^c$. By $(G_{\tau^c})$, $F=\bigcap_{n\in\mathbb{N}}\phi_F^{-1}[Y\setminus B_n]=\bigcap_{n\in\mathbb{N}}\rho(n,F)$. Let $F\in\tau^c$, $K\in{\mathcal
C}_X$ and $F\cap K=\emptyset$. By $(b_{\tau^c}(\mathcal C_X))$, $K\subset\phi_F^{\#}[B_m]$ for some $m\in\mathbb{N}$. It follows that $K\cap\rho(m,F)=\emptyset$.

By Definition \ref{d1}, $X$ is a $k$-semi-stratifiable space.
\end{proof}

As for semi-stratifiable spaces, we have the following.

\begin{Prop}\label{P3}
A space $X$ is semi-stratifiable if and only if there exists a map $\phi:\tau^c\rightarrow\mathcal L(X,\mathcal F(Y))$ satisfying $(G_{
\tau^c})$ and $(m_{\tau^c})$.
\end{Prop}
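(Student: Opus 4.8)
The plan is to follow the same template established in Theorem \ref{T4}, but to drop the compactness-based boundedness condition entirely, since semi-stratifiability is exactly $k$-semi-stratifiability stripped of its condition (3). The key observation is that conditions (1) and (2) of Definition \ref{d1} for a semi-stratification correspond precisely to $(G_{\tau^c})$ and $(m_{\tau^c})$, so the equivalence should reduce to the bookkeeping already carried out in Lemma \ref{L1} and in the first half of Theorem \ref{T4}.

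For the forward direction, I would let $\rho$ be a semi-stratification for $X$, assumed decreasing in $n$ (as remarked after Definition \ref{d1}). For each $F\in\tau^c$ and $x\notin F$, set $n_F(x)=\min\{n\in\mathbb{N}:x\notin\rho(n,F)\}$, and define $\phi_F:X\rightarrow\mathcal F(Y)$ by $\phi_F(x)=Y$ for $x\in F$ and $\phi_F(x)=B_{n_F(x)}$ for $x\notin F$. Since $F=\bigcap_{n\in\mathbb{N}}\rho(n,F)$ with the $\rho(n,F)$ open and decreasing, Lemma \ref{L1} immediately gives that $\phi:\tau^c\rightarrow\mathcal L(X,\mathcal F(Y))$ satisfies $(G_{\tau^c})$; and because $F\subset H$ implies $\rho(n,F)\subset\rho(n,H)$ by condition (2), the second clause of Lemma \ref{L1} delivers $(m_{\tau^c})$. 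No boundedness argument is needed, so this direction is even shorter than in Theorem \ref{T4}.

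For the converse, given $\phi$ satisfying $(G_{\tau^c})$ and $(m_{\tau^c})$, I would define $\rho(n,F)=\phi_F^{-1}[Y\setminus B_n]$ for each $F\in\tau^c$ and $n\in\mathbb{N}$. Lower semi-continuity of $\phi_F$ makes each $\rho(n,F)$ open; condition $(m_{\tau^c})$ yields the monotonicity $\rho(n,F)\subset\rho(n,H)$ whenever $F\subset H$, verifying (2); and $(G_{\tau^c})$ gives $F=\bigcap_{n\in\mathbb{N}}\phi_F^{-1}[Y\setminus B_n]=\bigcap_{n\in\mathbb{N}}\rho(n,F)$, which is condition (1) for a semi-stratification. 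By Definition \ref{d1}, $X$ is semi-stratifiable.

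I do not anticipate a genuine obstacle here; the proposition is essentially the degenerate case of Theorem \ref{T4}. The only point demanding a little care is confirming that dropping $(b_{\tau^c}(\mathcal C_X))$ really corresponds exactly to dropping condition (3) and leaves the semi-stratification axioms (1) and (2) fully captured by $(G_{\tau^c})$ and $(m_{\tau^c})$ — but this is transparent from the correspondence established above. Accordingly, I would present the argument tersely, citing Lemma \ref{L1} for the forward direction and reusing the verbatim bookkeeping of Theorem \ref{T4}'s converse with the boundedness step simply deleted.
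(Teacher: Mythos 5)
Your proposal is correct and matches the paper's intent exactly: Proposition \ref{P3} is stated without proof precisely because it is the specialization of Theorem \ref{T4} with the boundedness condition $(b_{\tau^c}(\mathcal C_X))$ and stratification condition (3) simultaneously deleted, which is what you carry out via Lemma \ref{L1} and the definition $\rho(n,F)=\phi_F^{-1}[Y\setminus B_n]$. No gaps; your appeal to the remark that $\rho$ may be taken decreasing in $n$ (needed to invoke Lemma \ref{L1}) is the only point of care, and you handle it.
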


\begin{Thm}\label{T5}
A space $X$ is stratifiable if and only if there exists a map $\phi:\tau^c\rightarrow\mathcal L(X,\mathcal F(Y))$ satisfying $(G_{
\tau^c})$, $(m_{\tau^c})$ and $(\diamond)$ for each $F\in\tau^c$, $\phi_F$ is locally bounded on $X\setminus F$.
\end{Thm}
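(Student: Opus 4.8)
The plan is to mimic the structure of the previous proofs (especially Theorem~\ref{T4} and Proposition~\ref{P3}), adding the closure-based refinement that distinguishes stratifiable spaces from semi-stratifiable ones. Recall that by Definition~\ref{d1}, a space is stratifiable precisely when it admits a semi-stratification $\rho$ that is decreasing in $n$ and additionally satisfies $F=\bigcap_{n}\overline{\rho(n,F)}$ for each $F\in\tau^c$. Since Proposition~\ref{P3} already pairs $(G_{\tau^c})$ and $(m_{\tau^c})$ with semi-stratifiability, the entire burden of both implications rests on translating the condition $F=\bigcap_n\overline{\rho(n,F)}$ into the local boundedness condition $(\diamond)$.

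For the forward direction I would start with a stratification $\rho$, decreasing in $n$, and build $\phi$ exactly as in the proof of Theorem~\ref{T4}: set $\phi_F(x)=Y$ for $x\in F$ and $\phi_F(x)=B_{n_F(x)}$ for $x\notin F$, where $n_F(x)=\min\{n:x\notin\rho(n,F)\}$. Lemma~\ref{L1} delivers $(G_{\tau^c})$ and $(m_{\tau^c})$ for free, since $\rho$ is monotone in $F$. To verify $(\diamond)$, fix $F\in\tau^c$ and a point $x\notin F$. Because $F=\bigcap_n\overline{\rho(n,F)}$, there is some $m$ with $x\notin\overline{\rho(m,F)}$, so $O=X\setminus\overline{\rho(m,F)}$ is an open neighborhood of $x$ disjoint from $\rho(m,F)$. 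For every $y\in O$ we have $y\notin\rho(m,F)$, hence $n_F(y)\le m$ and $\phi_F(y)=B_{n_F(y)}\subset B_m$ (and $y\notin F$ since $F\subset\rho(m,F)$), which shows $\phi_F$ is bounded on $O$. Thus $\phi_F$ is locally bounded at every point of $X\setminus F$, giving $(\diamond)$.

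For the converse I would reverse-engineer a stratification from $\phi$ by setting $\rho(n,F)=\phi_F^{-1}[Y\setminus B_n]$, exactly as in the converse of Theorem~\ref{T4}. Then $(G_{\tau^c})$ yields $F=\bigcap_n\rho(n,F)$, lower semi-continuity makes each $\rho(n,F)$ open, and $(m_{\tau^c})$ gives monotonicity in $F$; these are the semi-stratification axioms. The only remaining point is to extract $F=\bigcap_n\overline{\rho(n,F)}$ from $(\diamond)$. The inclusion $F\subset\overline{\rho(n,F)}$ is automatic. For the reverse, take $x\notin F$; by $(\diamond)$ there is an open neighborhood $O$ of $x$ and an $m$ with $O\subset\phi_F^{\#}[B_m]$. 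Then for each $y\in O$ we have $\phi_F(y)\subset B_m$, so $\phi_F(y)\setminus B_m=\emptyset$, meaning $y\notin\phi_F^{-1}[Y\setminus B_m]=\rho(m,F)$; hence $O\cap\rho(m,F)=\emptyset$ and $x\notin\overline{\rho(m,F)}$, so $x\notin\bigcap_n\overline{\rho(n,F)}$.

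The main obstacle, modest as it is, is the bookkeeping around the bounded-on-$O$ definition in the two directions: in the forward direction I must ensure the chosen neighborhood $O$ really lies in $\phi_F^{\#}[B_m]$ (which requires checking both $y\notin F$ and $n_F(y)\le m$ for $y\in O$), and in the converse I must correctly unwind the unbounded-versus-bounded duality to pass from $O\subset\phi_F^{\#}[B_m]$ to disjointness of $O$ from $\rho(m,F)$. Once the equivalence $y\in\overline{\rho(m,F)}\iff\text{every neighborhood of }y\text{ meets }\rho(m,F)$ is lined up against the local boundedness statement, both directions close cleanly, and the semi-stratifiable part is inherited verbatim from Proposition~\ref{P3}.
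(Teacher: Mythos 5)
Your proposal is correct and follows essentially the same path as the paper's proof: the forward direction builds $\phi_F$ from the stratification via $n_F(x)=\min\{n:x\notin\rho(n,F)\}$ and uses $F=\bigcap_n\overline{\rho(n,F)}$ to obtain $(\diamond)$, while the converse sets $\rho(n,F)=\phi_F^{-1}[Y\setminus B_n]$ and derives $\bigcap_n\overline{\rho(n,F)}\subset F$ from $(\diamond)$ (the paper phrases this last step via the duality $X\setminus int(\phi_F^{\#}[B_m])=\overline{\phi_F^{-1}[Y\setminus B_m]}$, which is just your pointwise neighborhood argument restated). Your explicit check that $y\notin F$ for $y\in O$ in the forward direction is a detail the paper leaves implicit, but it is the same argument.
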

\begin{proof}
Let $\rho$ be the stratification for $X$ which is decreasing with respect to $n$. Define a map $\phi:\tau^c\rightarrow\mathcal L(X,\mathcal F(Y))$ as that in
the proof of the sufficiency of Theorem \ref{T4}. Then we only need to show $(\diamond)$.

Let $F\in\tau^c$. For each $x\in X\setminus F$, there is $m\in\mathbb{N}$ such that $x\notin\overline{\rho(m,F)}$. Then $O=X\setminus\overline{\rho(m,F)}$ is an
open neighborhood of $x$. For each $y\in O$, $n_F(y)\leq m$ from which it follows that $\phi_F(y)\subset B_m$. This implies that $\phi_F$ is locally bounded
at $x$.

Conversely, for each $F\in\tau^c$ and $n\in\mathbb{N}$, define an open subset $\rho(n,F)$ of $X$ as that in the proof of the sufficiency of Theorem \ref{T4}.
Then we only need to show that $\bigcap_{n\in\mathbb{N}}\overline{\rho(n,F)}\subset F$.

Let $F\in\tau^c$. By $(\diamond)$, if $x\notin F$, then $x\in int(\phi_F^{\#}[B_m])$ for some $m\in\mathbb{N}$. It follows that $x\notin X\setminus int(\phi_F^
{\#}[B_m])=\overline{\phi_F^{-1}[Y\setminus B_m]}=\overline{\rho(m,F)}$. This implies that $\bigcap_{n\in\mathbb{N}}\overline{\rho(n,F)}\subset F$.
\end{proof}

\section{Spaces in both classes}
In this section, we present characterizations of some spaces contained in both the the class of first countable spaces and the class of semi-stratifiable spaces
such as semi-metrizable spaces, Nagata-spaces and strongly quasi-metrizable spaces.

A space $X$ is called semi-metrizable \cite{W} if there is a function $d:X\times X\rightarrow [0,\infty)$ such that (1) $d(x,y)=0$ if and only if $x=y$; (2) $d(
x,y)=d(y,x)$ for all $x,y\in X$; (3) for each $x\in X$, $\{B(x,r):r>0\}$ is a neighborhood base of $x$, where $B(x,r)=\{y\in X: d(x,y)<r\}$. $X$ is called
$o$-semimetrizable \cite{G} if there is a semi-metric on $X$ such that for each $x\in X$ and $r>0$, $B(x,r)$ is open. $X$ is called $K$-semimetrizable \cite{M}
if there is a semi-metric $d$ on $X$ such that $d(K,H)>0$ for every disjoint pair $K,H$ of nonempty compact subsets of $X$.

\begin{Thm}\label{T6}
A space $X$ is semi-metrizable if and only if there exists a map $\phi:\tau^c\rightarrow\mathcal L(X,\mathcal F(Y))$ satisfying $(G_{
\tau^c})$, $(m_{\tau^c})$ and $(b_{\mathcal S_X}(\tau^c))$.
\end{Thm}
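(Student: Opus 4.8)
The plan is to prove Theorem~\ref{T6} by combining the structural ingredients already developed: the $k$-semi-stratifiable characterization of Theorem~\ref{T4} (and its underlying Lemma~\ref{L1}) on the one hand, and the first-countability characterization of Proposition~\ref{P2} on the other. The key observation is that a semi-metrizable space is precisely a semi-stratifiable space that is in addition first countable in a compatible way: the semi-metric $d$ simultaneously produces a semi-stratification $\rho(n,F)=\{x:d(x,F)<1/n\}$ (giving $(G_{\tau^c})$ and $(m_{\tau^c})$ via Lemma~\ref{L1}) and neighborhood bases $\{B(x,1/n):n\in\mathbb{N}\}$ at each point (giving $(b_{\mathcal S_X}(\tau^c))$). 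So the conditions $(G_{\tau^c})$, $(m_{\tau^c})$, $(b_{\mathcal S_X}(\tau^c))$ should correspond exactly to: semi-stratifiable, monotone, and first countable.

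For the necessity direction, I would start from a semi-metric $d$ on $X$. Following the pattern in Theorem~\ref{T4}, for each $F\in\tau^c$ set $\rho(n,F)=\{x\in X:d(x,F)<1/n\}$, where $d(x,F)=\inf\{d(x,y):y\in F\}$; these are decreasing in $n$, monotone in $F$, and satisfy $F=\bigcap_n\rho(n,F)$ because $d(x,F)=0$ iff $x\in F$ (using $d(x,y)=0\Leftrightarrow x=y$ together with closedness of $F$). Then define $\phi_F$ exactly as in Lemma~\ref{L1} with $G(n,F)=\rho(n,F)$, so that $\phi$ automatically satisfies $(G_{\tau^c})$ and $(m_{\tau^c})$. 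The remaining task is $(b_{\mathcal S_X}(\tau^c))$: given a singleton $\{x\}$ and a closed set $F$ with $x\notin F$, I must show $\phi_x$ is bounded on $F$, i.e. $F\subset\phi_x^{\#}[B_m]$ for some $m$. Here $\phi_x(y)=B_{n_x(y)}$ where $n_x(y)=\min\{n:y\notin\rho(n,\{x\})\}=\min\{n:d(x,y)\ge 1/n\}$. Since $x\notin F$ and $\{B(x,1/n)\}$ is a neighborhood base, there is $m$ with $B(x,1/m)\cap F=\emptyset$; then every $y\in F$ has $d(x,y)\ge 1/m$, forcing $n_x(y)\le m$ and hence $\phi_x(y)\subset B_m$, as required.

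For the converse, suppose $\phi$ satisfies the three conditions. By Proposition~\ref{P3}, the conditions $(G_{\tau^c})$ and $(m_{\tau^c})$ already make $X$ semi-stratifiable; define $\rho(n,F)=\phi_F^{-1}[Y\setminus B_n]$ exactly as there, obtaining a semi-stratification. The job is to upgrade this to an actual semi-metric, and for that I would use $(b_{\mathcal S_X}(\tau^c))$ to show $X$ is first countable, then invoke the classical fact that a first-countable semi-stratifiable space is semi-metrizable. Concretely, set $g(n,x)=\phi_x^{-1}[Y\setminus B_n]$; by $(G_{\mathcal S_X})$ (the restriction of $(G_{\tau^c})$ to $\mathcal S_X$) each $x\in g(n,x)$, these are open and decreasing, and $(b_{\mathcal S_X}(\tau^c))$ guarantees that whenever $x\notin F$ there is an $m$ with $F\cap g(m,x)=\emptyset$, which is exactly the condition forcing $\{g(n,x):n\in\mathbb{N}\}$ to be a neighborhood base at $x$ (as in the proof of Proposition~\ref{P2}). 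A standard construction then builds a symmetric semi-metric from the semi-stratification $\rho$ restricted to singletons, for instance via $d(x,y)=\inf\{1/n:y\in\rho(n,\{x\})\text{ and }x\in\rho(n,\{y\})\}$ or a comparable symmetrized formula, and first countability ensures its balls generate the topology.

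The main obstacle I anticipate is the converse direction, specifically the passage from the abstract semi-stratification plus first countability back to a genuine symmetric distance function satisfying all three semi-metric axioms. The monotone and intersection conditions give a semi-stratification cleanly, but extracting a \emph{symmetric} $d$ whose balls $B(x,r)$ form neighborhood bases requires care: symmetry is not directly encoded in $(G_{\tau^c})$, $(m_{\tau^c})$, $(b_{\mathcal S_X}(\tau^c))$, so I would either appeal to the known equivalence ``first countable $+$ semi-stratifiable $\Rightarrow$ semi-metrizable'' as a cited black box or symmetrize by hand, defining $d(x,y)=\max\{d'(x,y),d'(y,x)\}$ for an auxiliary $d'$ read off from the $g$-function and verifying that the three neighborhood-base and separation properties survive the symmetrization. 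Everything else—lower semi-continuity, $(G)$, $(m)$—is routine and inherited verbatim from Lemma~\ref{L1} and Theorem~\ref{T4}.
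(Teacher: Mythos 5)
Your overall route coincides with the paper's: build $\phi$ from the semi-metric via Lemma \ref{L1} for the necessity, and for the converse combine Proposition \ref{P2} (first countability from $(G_{\mathcal S_X})$ and $(b_{\mathcal S_X}(\tau^c))$) with Proposition \ref{P3} (semi-stratifiability from $(G_{\tau^c})$ and $(m_{\tau^c})$) and then cite Heath's theorem that first countable $+$ semi-stratifiable $\Rightarrow$ semi-metrizable. The converse half of your proposal is correct exactly as written; your worry about having to symmetrize a distance function by hand is unnecessary, since the black-box citation is precisely what the paper does, and no explicit semi-metric ever needs to be constructed.

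The gap is in the necessity direction. You take $G(n,F)=\rho(n,F)=\{x\in X:d(x,F)<1/n\}$ and claim that Lemma \ref{L1} then applies ``automatically.'' But Lemma \ref{L1} requires the sets $G(n,A)$ to be \emph{open}, and in a semi-metric space the sets $\rho(n,F)=\bigcup_{y\in F}B(y,\frac{1}{n})$ need not be open: the semi-metric axioms only make each ball $B(y,r)$ a neighborhood of its center, not an open set. (This is exactly the distinction between semi-metrizable and $o$-semimetrizable spaces that the paper exploits in Theorem \ref{T8}; if balls were automatically open, the two notions would coincide.) Without openness, the proof of Lemma \ref{L1} collapses at the step where $G(m,A)$ serves as an open neighborhood witnessing lower semi-continuity, so you never establish that $\phi_F\in\mathcal L(X,\mathcal F(Y))$. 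The repair is small and is what the paper does: use $U(n,F)=\bigcup\{int(B(x,\frac{1}{n})):x\in F\}$ instead. These sets are open; since each ball is a neighborhood of its center, $F\subset U(n,F)$, while $\bigcap_{n\in\mathbb{N}}U(n,F)=F$ still holds because $U(n,F)\subset\rho(n,F)$; and your boundedness argument survives verbatim, since $B(x,\frac{1}{m})\cap F=\emptyset$ still gives $y\notin int(B(x,\frac{1}{m}))=U(m,\{x\})$, hence $n_{\{x\}}(y)\leq m$ and $\phi_x(y)\subset B_m$, for every $y\in F$.
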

\begin{proof}
For each $F\in\tau^c$ and $n\in\mathbb{N}$, let $U(n,F)=\cup\{int(B(x,\frac{1}{n})):x\in F\}$. Then $\bigcap_{n\in\mathbb{N}}U(n,F)=F$. For each $x\notin F$, let
$n_F(x)=\min\{n\in\mathbb{N}:x\notin U(n,F)\}$. For each $F\in\tau^c$, define a map $\phi_F:X\rightarrow\mathcal F(Y)$ by letting $\phi_F(x)=Y$ whenever $x\in F$
and $\phi_F(x)=B_{n_F(x)}$ whenever $x\notin F$. By Lemma \ref{L1}, $\phi$ satisfies $(G_{\tau^c})$ and $(m_{\tau^c})$.

Let $x\notin F\in\tau^c$. Then there exists $m\in\mathbb{N}$ such that $B(x,\frac{1}{m})\cap F=\emptyset$. It follows that $y\notin int(B(x,\frac{1}{m}))=U(
m,\{x\})$ for each $y\in F$. Hence $n_{\{x\}}(y)\leq m$ and so $\phi_x(y)=B_{n_{\{x\}}(y)}\subset B_m$. This implies that $\phi_x$ is bounded on $F$.

Conversely, since $\phi$ satisfies $(G_{\mathcal S_X})$ and $(b_{\mathcal S_X}(\tau^c))$ (that is, $\phi|_{\mathcal S_X}$ satisfies $(G_{\mathcal S_X})$ and
$(b_{\mathcal S_X}(\tau^c))$), $X$ is first countable by Proposition \ref{P2}. By Proposition \ref{P3}, $X$ is semi-stratifiable. Thus $X$ is semi-metrizable
\cite{H}.
\end{proof}

\begin{Thm}\label{T7}
A space $X$ is $K$-semimetrizable if and only if there exists a map $\phi:\tau^c\rightarrow\mathcal L(X,\mathcal F(Y))$ satisfying $(G_{
\tau^c})$, $(m_{\tau^c})$, $(b_{\mathcal S_X}(\tau^c))$ and $(b_{\mathcal C_X}(\mathcal C_X))$.
\end{Thm}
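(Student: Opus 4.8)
The plan is to follow the proof of Theorem~\ref{T6} for the three conditions $(G_{\tau^c})$, $(m_{\tau^c})$, $(b_{\mathcal S_X}(\tau^c))$ (which by themselves already force semi-metrizability) and to use the extra condition $(b_{\mathcal C_X}(\mathcal C_X))$ only to account for the compact-to-compact separation that upgrades a semi-metric to a $K$-semimetric. Throughout I abbreviate $\rho(n,F)=\phi_F^{-1}[Y\setminus B_n]$ for a closed set $F$ and $g(n,x)=\phi_x^{-1}[Y\setminus B_n]$ for a point $x$, so that $g(n,x)=\rho(n,\{x\})$.

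For necessity, let $d$ be a semi-metric witnessing $K$-semimetrizability. As $d$ is in particular a semi-metric, $X$ is semi-metrizable and I reuse the construction from Theorem~\ref{T6}: put $U(n,F)=\bigcup\{int(B(x,\frac1n)):x\in F\}$, set $n_F(x)=\min\{n:x\notin U(n,F)\}$ for $x\notin F$, and let $\phi_F(x)=Y$ for $x\in F$ and $\phi_F(x)=B_{n_F(x)}$ for $x\notin F$. By Lemma~\ref{L1} and the argument in Theorem~\ref{T6} this $\phi$ satisfies $(G_{\tau^c})$, $(m_{\tau^c})$ and $(b_{\mathcal S_X}(\tau^c))$, so only $(b_{\mathcal C_X}(\mathcal C_X))$ is new. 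Given disjoint compact sets $K,L$, the $K$-property gives $\delta=d(K,L)>0$; choosing $m$ with $\frac1m<\delta$, no $x\in L$ can lie in $U(m,K)$, since $x\in int(B(z,\frac1m))\subset B(z,\frac1m)$ with $z\in K$ would force $d(z,x)<\frac1m<\delta\le d(z,x)$. Hence $n_K(x)\le m$ and $\phi_K(x)=B_{n_K(x)}\subset B_m$ for every $x\in L$, so $L\subset\phi_K^{\#}[B_m]$ and $\phi_K$ is bounded on $L$.

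For sufficiency, the first three conditions give, through Theorem~\ref{T6}, that $X$ is semi-metrizable; equivalently, by Propositions~\ref{P2} and \ref{P3}, $X$ is first countable with neighborhood bases $\{g(n,x):n\in\mathbb N\}$ and semi-stratifiable with the monotone semi-stratification $\rho$. The effect of $(b_{\mathcal C_X}(\mathcal C_X))$ is now clean: for disjoint compact $K,L$ it yields $m$ with $L\subset\phi_K^{\#}[B_m]$, i.e.\ $L\cap\rho(m,K)=\emptyset$, and since $\rho(m,\{x\})\subset\rho(m,K)$ for each $x\in K$ by $(m_{\tau^c})$, we get the uniform separation $L\cap g(m,x)=\emptyset$ for all $x\in K$. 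Setting $a(x,y)=\min\{n:\phi_x(y)\subset B_n\}$ for $x\neq y$ (so that $a(x,y)\le m$ exactly when $y\notin g(m,x)$, and $a(x,y)<\infty$ because $\bigcap_n g(n,x)=\{x\}$ by $(G_{\mathcal S_X})$) and $d(x,y)=\max\{\frac1{a(x,y)},\frac1{a(y,x)}\}$, the displayed separation immediately gives $d(x,y)\ge\frac1m$ whenever $x\in K$ and $y\in L$, hence $d(K,L)\ge\frac1m>0$: the $K$-property is the \emph{easy} half once a semi-metric of this shape is available.

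The genuine obstacle is therefore the compatibility of the semi-metric, not the $K$-property. Because each $\{g(n,x):n\in\mathbb N\}$ is a base \emph{at} $x$, it is immediate that every neighborhood of $x$ contains some $B_d(x,\frac1n)$; but $B_d(x,\frac1n)=g(n,x)\cap\{y:x\in g(n,y)\}$, and the second factor need not be a neighborhood of $x$, so the naive symmetrization above may fail to have its balls as neighborhoods. This ``balls are neighborhoods'' step is exactly the delicate content of the passage from a first countable semi-stratifiable space to a semi-metric already packaged into Theorem~\ref{T6} via \cite{H}. I would accordingly keep the compatible semi-metric delivered by Theorem~\ref{T6} and only upgrade it, the sole extra point being to convert the uniform separation $L\cap g(m,x)=\emptyset$ for all $x\in K$ into one positive radius valid simultaneously along the compact set $K$---and it is here, in passing from the pointwise thresholds to a uniform gap, that the compactness of $K$ must be invoked.
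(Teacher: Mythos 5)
Your necessity half coincides with the paper's and is correct, but the sufficiency half contains a genuine gap which you diagnose yourself and then fail to close. Your symmetrization $d(x,y)=\max\{1/a(x,y),1/a(y,x)\}$, with $a(x,y)=\min\{n\in\mathbb{N}:\phi_x(y)\subset B_n\}$, makes the $K$-property immediate, but its balls $B_d(x,\frac1n)=g(n,x)\cap\{y:x\in g(n,y)\}$ need not be neighborhoods of $x$, so compatibility is unproved. The repair you propose --- keep the compatible semi-metric $d'$ produced abstractly by Theorem \ref{T6} via \cite{H} and ``upgrade'' it by invoking compactness of $K$ --- cannot be completed. The separation $L\cap g(m,x)=\emptyset$ for all $x\in K$ bounds the indices $a(x,y)$, not the distances $d'(x,y)$; compatibility of $d'$ only yields, for each $x\in K$, some radius $r_x>0$ with $B_{d'}(x,r_x)\cap L=\emptyset$, and the standard device for converting these pointwise radii into a uniform gap (cover $K$ by finitely many half-radius balls, then apply the triangle inequality) is exactly what a semi-metric does not permit. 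The obstruction is essential, not technical: even on a metrizable space one can perturb a compatible semi-metric along two disjoint compacta $K,H$ (e.g.\ redefine $d'(x_n,y_n)=\frac1n$ for suitable sequences $x_n\in K$, $y_n\in H$, leaving all other distances alone) so that it stays compatible yet $d'(K,H)=0$. Hence no argument that starts from an arbitrary compatible semi-metric unrelated to $\phi$ and never modifies its values can establish the $K$-property; the semi-metric must be built directly from $\phi$.

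The missing idea is the \emph{opposite} symmetrization, and that is precisely the paper's proof. Put $m(x,y)=\min\{n\in\mathbb{N}:\phi_x(y)\subset B_n\ {\rm and}\ \phi_y(x)\subset B_n\}=\max\{a(x,y),a(y,x)\}$ (finite by $(b_{\mathcal S_X}(\tau^c))$) and $d(x,y)=\frac1{m(x,y)}$ for $x\neq y$; that is, take the \emph{smaller} of the two radii rather than the larger. Then $y\in B_d(x,\frac1n)$ iff $y\in g(n,x)$ \emph{or} $x\in g(n,y)$, so $B_d(x,\frac1n)\supset g(n,x)=\phi_x^{-1}[Y\setminus B_n]$, which is an open neighborhood of $x$ by lower semi-continuity and $(G_{\tau^c})$: balls are neighborhoods for free. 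For the other half of compatibility, if $x\notin F\in\tau^c$, then $(G_{\tau^c})$ gives $m$ with $\phi_F(x)\subset B_m$, $(m_{\tau^c})$ gives $\phi_y(x)\subset\phi_F(x)\subset B_m$ for every $y\in F$, and $(b_{\mathcal S_X}(\tau^c))$ gives $k\geq m$ with $\phi_x(y)\subset B_k$ for every $y\in F$; hence $m(x,y)\leq k$ for all $y\in F$ and $B_d(x,\frac1k)\cap F=\emptyset$. So $d$ is a compatible semi-metric, with no appeal to \cite{H} at all. The cost of this choice is that the $K$-property now needs bounds in \emph{both} directions, and that is exactly what $(b_{\mathcal C_X}(\mathcal C_X))$ supplies when applied to both ordered pairs $(K,H)$ and $(H,K)$: together with $(m_{\tau^c})$ one gets $m$ with $\phi_x(y)\subset\phi_K(y)\subset B_m$ and $\phi_y(x)\subset\phi_H(x)\subset B_m$ for all $x\in K$, $y\in H$, whence $m(x,y)\leq m$ and $d(K,H)\geq\frac1m>0$. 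In short: your symmetrization makes the $K$-property trivial and compatibility unreachable, while the reversed one makes compatibility easy and consumes the hypothesis $(b_{\mathcal C_X}(\mathcal C_X))$ exactly where it is needed.
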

\begin{proof}
For each $F\in\tau^c$, define $U(n,F)$ and $\phi_F$ as that in the proof of Theorem \ref{T6}. Then we only need to show $(b_{\mathcal C_X}(\mathcal C_X))$.

Let $K,H\in{\mathcal C}_X$ and $K\cap H=\emptyset$. Then $d(H,K)>0$ and thus there exists $m\in\mathbb{N}$ such that $d(x,y)>\frac{1}{m}$ for each $x\in H$ and
$y\in K$. It follows that $x\notin U(m,K)$ for each $x\in H$. Thus $n_K(x)\leq m$ and so $\phi_K(x)\subset B_m$ for each $x\in H$. This implies that $\phi_K$
is bounded on $H$.

Conversely, for each $x,y\in X$ with $x\neq y$, let $m(x,y)=\min\{n\in\mathbb{N}:y\in\phi^{\#}_x[B_n] \ {\rm and} \ x\in\phi^{\#}_y[B_n]\}$. Define a function
$d:X\times X\rightarrow [0,\infty)$ by letting $d(x,y)=0$ whenever $x=y$ and $d(x,y)=\frac{1}{m(x,y)}$ whenever $x\neq y$. It is easy to verify that $y\in B(x,
\frac{1}{n})$ if and only if $y\notin\phi^{\#}_x[B_n]$ or $x\notin\phi^{\#}_y[B_n]$.

Claim 1. $B(x,r)$ is a neighborhood of $x$ for each $x\in X$ and $r>0$.

{\it Proof.} Let $r>0$ and choose $m\in\mathbb{N}$ such that $\frac{1}{m}<r$. Let $O_x=\phi^{-1}_x[Y\setminus B_m]$. Then $O_x$ is an open neighborhood of $x$
and $O_x\subset B(x,\frac{1}{m})\subset B(x,r)$. This implies that $B(x,r)$ is a neighborhood of $x$.

Claim 2. If $x\notin F\in\tau^c$, then $B(x,r)\cap F=\emptyset$ for some $r>0$.

{\it Proof.} Let $x\notin F\in\tau^c$. By $(b_F(\{x\}))$, $\phi_F(x)\subset B_m$ for some $m\in\mathbb{N}$. By $(b_{\mathcal S_X}(\tau^c))$, there exists $k\geq
m$ such that $y\in\phi^{\#}_x[B_k]$ for each $y\in F$. Also, By $(m_{\tau^c})$, for each $y\in F$, $\phi_y(x)\subset\phi_F(x)\subset B_k$ which implies that $x
\in\phi^{\#}_y[B_k]$. Therefore, $y\notin B(x,\frac{1}{k})$ for each $y\in F$ and thus $B(x,\frac{1}{k})\cap F=\emptyset$.

By Claim 1 and Claim 2, $d$ is a semi-metric on $X$.

Now, let $K,H\in{\mathcal C}_X$ and $K\cap H=\emptyset$. By $(b_{\mathcal C_X}(\mathcal C_X))$, there exists $m\in\mathbb{N}$ such that $K\subset\phi_H^{\#}[B_m]
$ and $H\subset\phi_K^{\#}[B_m]$. Thus for each $x\in K$ and $y\in H$, $x\in\phi_H^{\#}[B_m]\subset\phi_y^{\#}[B_m]$ and $y\in\phi_K^{\#}[B_m]\subset\phi_x^{\#}[
B_m]$. It follows that $m(x,y)\leq m$ and so $d(x,y)=\frac{1}{m(x,y)}\geq\frac{1}{m}$. This implies that $d(K,H)>0$. Therefore, $X$ is a $K$-semimetrizable space.
\end{proof}

\begin{Thm}\label{T8}
For a space $X$, the following are equivalent.

$(a)$ $X$ is $o$-semimetrizable.

$(b)$ There exists a map $\phi:\tau^c\rightarrow\mathcal L(X,\mathcal F(Y))$ satisfying $(G_{\tau^c})$, $(m_{\tau^c})$ and $(S)$.

$(c)$ There exists a map $\phi:\mathcal S_X\rightarrow\mathcal L(X,\mathcal F(Y))$ satisfying $(G_{\mathcal S_X})$, $(b_{\mathcal S_X}(\tau^c))$ and $(S)$.
\end{Thm}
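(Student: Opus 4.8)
The plan is to prove the cycle $(a)\Rightarrow(b)\Rightarrow(c)\Rightarrow(a)$, letting the symmetry condition $(S)$ and the symmetry of a semi-metric feed one another. For $(a)\Rightarrow(b)$ I would reuse the construction from the proof of Theorem \ref{T6}: starting from an $o$-semi-metric $d$, put $U(n,F)=\bigcup\{int(B(x,\frac{1}{n})):x\in F\}$ and define $\phi_F$ exactly as there, so that Lemma \ref{L1} delivers $(G_{\tau^c})$ and $(m_{\tau^c})$ at once. The only new point is $(S)$, and this is precisely where the $o$-hypothesis is used: since every ball is open, $int(B(x,\frac{1}{n}))=B(x,\frac{1}{n})$, whence $n_{\{x\}}(y)=\min\{n:y\notin B(x,\frac{1}{n})\}=\min\{n:d(x,y)\geq\frac{1}{n}\}$; as $d(x,y)=d(y,x)$, this integer is unchanged under swapping $x$ and $y$, so $\phi_x(y)=B_{n_{\{x\}}(y)}=B_{n_{\{y\}}(x)}=\phi_y(x)$.

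For $(b)\Rightarrow(c)$ I would restrict $\phi$ to $\mathcal S_X\subset\tau^c$ (singletons are closed in a Hausdorff space). Then $(G_{\mathcal S_X})$ and $(S)$ are inherited directly, and the work is to extract $(b_{\mathcal S_X}(\tau^c))$ from $(G_{\tau^c})$, $(m_{\tau^c})$ and $(S)$. Given $x\notin F\in\tau^c$, condition $(G_{\tau^c})$ provides an $n_0$ with $\phi_F(x)\subset B_{n_0}$, a bound uniform over the points of $F$. For each $y\in F$ monotonicity gives $\phi_y(x)\subset\phi_F(x)\subset B_{n_0}$, and $(S)$ rewrites this as $\phi_x(y)=\phi_y(x)\subset B_{n_0}$; hence $F\subset\phi_x^{\#}[B_{n_0}]$, i.e. $\phi_x$ is bounded on $F$.

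For $(c)\Rightarrow(a)$ I would build the semi-metric directly. For $x\neq y$ set $m(x,y)=\min\{n:\phi_x(y)\subset B_n\}$, the minimum existing because $(b_{\mathcal S_X}(\tau^c))$ applied to the disjoint closed pair $\{x\},\{y\}$ bounds $\phi_x$ on $\{y\}$; then let $d(x,y)=\frac{1}{m(x,y)}$ and $d(x,x)=0$. Symmetry of $d$ is immediate from $(S)$, and $d(x,y)=0\Leftrightarrow x=y$. The governing identity is $B(x,\frac{1}{n})=\phi_x^{-1}[Y\setminus B_n]$, valid since $m(x,y)>n$ iff $\phi_x(y)\not\subset B_n$ iff $y\in\phi_x^{-1}[Y\setminus B_n]$; the right-hand side is open by lower semi-continuity. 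That $\{B(x,\frac{1}{n}):n\in\mathbb{N}\}$ is a neighborhood base of $x$ follows from $(b_{\mathcal S_X}(\tau^c))$: for open $U\ni x$, bounding $\phi_x$ on the closed set $X\setminus U$ yields some $B(x,\frac{1}{m})\subset U$, while $(G_{\mathcal S_X})$ gives $\bigcap_{n}B(x,\frac{1}{n})=\{x\}$.

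I expect the main obstacle to be the extra demand that distinguishes $o$-semimetrizability from semimetrizability, namely that every ball $B(x,r)$ — not merely those of radius $\frac{1}{n}$ — be open. Since $d$ attains values only in $\{0\}\cup\{\frac{1}{n}:n\in\mathbb{N}\}$, for each $r>0$ the ball $B(x,r)$ coincides with $B(x,\frac{1}{n})=\phi_x^{-1}[Y\setminus B_n]$ for a suitable $n$ (or with all of $X$ when $r$ is large), so openness follows from the identity above once one matches the radius $r$ to the largest attained value below it. This discreteness-of-range bookkeeping is the one spot needing care; everywhere else the argument is the symmetric counterpart of the reasoning already used in Theorems \ref{T6} and \ref{T7}.
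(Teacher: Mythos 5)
Your proposal is correct, and two of the three implications follow the paper exactly: for (a)$\Rightarrow$(b) the paper also takes $U(n,F)=\bigcup\{B(x,\frac{1}{n}):x\in F\}$ (open precisely because of the $o$-hypothesis) and gets $(S)$ from the symmetry $y\notin B(x,\frac{1}{n})\Leftrightarrow x\notin B(y,\frac{1}{n})$; for (b)$\Rightarrow$(c) the paper gives word-for-word your argument ($\phi_F(x)\subset B_m$ from $(G_{\tau^c})$, then $\phi_x(y)=\phi_y(x)\subset\phi_F(x)\subset B_m$ by $(S)$ and $(m_{\tau^c})$). Where you genuinely diverge is (c)$\Rightarrow$(a): the paper sets $g(n,x)=\phi_x^{-1}[Y\setminus B_n]$, observes that this is a symmetric $g$-function whose values form neighborhood bases, and then simply cites the characterization of $o$-semimetrizability via symmetric $g$-functions from \cite{Go}, whereas you construct the $o$-semi-metric explicitly: $d(x,y)=\frac{1}{m(x,y)}$ with $m(x,y)=\min\{n:\phi_x(y)\subset B_n\}$, the identity $B(x,\frac{1}{n})=\phi_x^{-1}[Y\setminus B_n]$ giving openness of the basic balls, and the discreteness of the range of $d$ (every value lies in $\{0\}\cup\{\frac{1}{n}:n\in\mathbb{N}\}$, so $B(x,r)$ is either $X$ or some $B(x,\frac{1}{n})$) giving openness of \emph{all} balls. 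Your route is longer but self-contained: it essentially inlines the proof of the result quoted from \cite{Go}, and it parallels the semi-metric construction the paper itself carries out in the converse direction of Theorem \ref{T7} (there $m(x,y)$ is defined by the two-sided condition $y\in\phi_x^{\#}[B_n]$ and $x\in\phi_y^{\#}[B_n]$, which under $(S)$ collapses to yours). The paper's route buys brevity and consistency with its general style of finishing sufficiency proofs by appeal to known characterizations; yours buys independence from the external reference and makes visible exactly where the $o$-property of the constructed semi-metric comes from. All the steps you flag as delicate (existence of $m(x,y)$ via $(b_{\mathcal S_X}(\tau^c))$ applied to the closed singleton $\{y\}$, the radius-matching argument, and the neighborhood-base property from bounding $\phi_x$ on $X\setminus U$) check out.
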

\begin{proof}
(a) $\Rightarrow$ (b) For each $F\in\tau^c$ and $n\in\mathbb{N}$, let $U(n,F)=\cup\{B(x,\frac{1}{n}):x\in F\}$. Then $\bigcap_{n\in\mathbb{N}}U(n,F)=F$. For each
$x\notin F$, let $n_F(x)=\min\{n\in\mathbb{N}:x\notin U(n,F)\}$. For each $F\in\tau^c$, define a map $\phi_F:X\rightarrow\mathcal F(Y)$ by letting $\phi_F(x)=Y$
whenever $x\in F$ and $\phi_F(x)=B_{n_F(x)}$ whenever $x\notin F$. Then $\phi:\tau^c\rightarrow\mathcal L(X,\mathcal F(Y))$ satisfies $(G_{\tau^c})$ and $(m_{
\tau^c})$. For each $x,y\in X$ with $x\neq y$, $y\notin U(n,\{x\})$ if and only if $x\notin U(n,\{y\})$ from which it follows that $n_{\{x\}}(y)=n_{\{y\}}(x)$.
Thus $\phi_x(y)=\phi_y(x)$.

(b) $\Rightarrow$ (c) Assume (b). Suppose that $x\notin F\in\tau^c$. Then $\phi_F(x)\subset B_m$ for some $m\in\mathbb{N}$. Hence, $\phi_x(y)=\phi_y(x)\subset
\phi_F(x)\subset B_m$ for each $y\in F$. This implies that $\phi_x$ is bounded on $F$.

(c) $\Rightarrow$ (a) Assume (c). Define a $g$-function $g$ for $X$ by letting $g(n,x)=\phi_x^{-1}[Y\setminus B_n]$ for each $x\in X$ and $n\in\mathbb{N}$. Let
$x,y\in X$. From $\phi_x(y)=\phi_y(x)$ it follows that for each $n\in\mathbb{N}$, $y\in g(n,x)$ if and only if $x\in g(n,y)$.

By $(b_{\mathcal S_X}(\tau^c))$, $\{g(n,x):n\in\mathbb{N}\}$ is a neighborhood base of $x$. Therefore $X$ is an $o$-semimetrizable space \cite{Go}.
\end{proof}

A space $X$ is called ultrametrizable \cite{d} if there exists a compatible metric $d$ on $X$ such that $d(x,z)\leq\max\{d(x,y),d(y,z)\}$ for each $x,y,z\in X$.

\begin{Thm}\label{T9}
For a space $X$, the following are equivalent.

$(a)$ $X$ is ultrametrizable.

$(b)$ There exists a map $\phi:\tau^c\rightarrow\mathcal L(X,\mathcal F(Y))$ satisfying $(G_{\tau^c})$, $(m_{\tau^c})$, $(S)$ and $(E)$.

$(c)$ There exists a map $\phi:\mathcal S_X\rightarrow\mathcal L(X,\mathcal F(Y))$ satisfying $(G_{\mathcal S_X})$, $(b_{\mathcal S_X}(\tau^c))$, $(S)$ and $(E)$.
\end{Thm}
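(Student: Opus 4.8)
The plan is to prove the cycle $(a)\Rightarrow(b)\Rightarrow(c)\Rightarrow(a)$, building on Theorem \ref{T8}, since an ultrametrizable space is in particular $o$-semimetrizable (open balls of an ultrametric are clopen). For $(a)\Rightarrow(b)$ I would take exactly the map constructed in $(a)\Rightarrow(b)$ of Theorem \ref{T8}, using $U(n,F)=\bigcup\{B(x,\frac1n):x\in F\}$ and $\phi_F(x)=Y$ for $x\in F$, $\phi_F(x)=B_{n_F(x)}$ for $x\notin F$ with $n_F(x)=\min\{n:x\notin U(n,F)\}$. That proof already yields $(G_{\tau^c})$, $(m_{\tau^c})$ and $(S)$, so the only new point is $(E)$. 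For distinct $x,y,z$ one has $\phi_x(y)=B_{n_{\{x\}}(y)}$, and since the $B_n$ are strictly increasing, $B_i\subset B_j$ iff $i\le j$; writing $f(t)=\min\{n:1/n\le t\}$ one checks $n_{\{x\}}(y)=f(d(x,y))$ and that $f$ is non-increasing. If $(E)$ failed for $x,y,z$ then $f(d(x,y))>f(d(x,z))$ and $f(d(y,z))>f(d(x,z))$, forcing $d(x,y)<d(x,z)$ and $d(y,z)<d(x,z)$, contradicting the ultrametric inequality $d(x,z)\le\max\{d(x,y),d(y,z)\}$. Triples with two coinciding points are immediate, so $(E)$ holds.

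For $(b)\Rightarrow(c)$: the conditions $(S)$ and $(E)$ only involve $\phi$ evaluated on singletons, so they pass verbatim to $\phi|_{\mathcal S_X}$, and $(G_{\tau^c})$ restricts to $(G_{\mathcal S_X})$. It remains to produce $(b_{\mathcal S_X}(\tau^c))$, which is obtained exactly as in $(b)\Rightarrow(c)$ of Theorem \ref{T8}: given $x\notin F$, condition $(G_{\tau^c})$ gives $m$ with $\phi_F(x)\subset B_m$; then $(m_{\tau^c})$ yields $\phi_y(x)\subset\phi_F(x)\subset B_m$ for every $y\in F$, and $(S)$ gives $\phi_x(y)=\phi_y(x)\subset B_m$, so $\phi_x$ is bounded on $F$.

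The substance of the theorem is $(c)\Rightarrow(a)$. Following the pattern of Theorems \ref{T3} and \ref{T8}, I would set $g(n,x)=\phi_x^{-1}[Y\setminus B_n]$; lower semi-continuity makes each $g(n,x)$ open, $(G_{\mathcal S_X})$ gives $x\in g(n,x)$, the sets decrease in $n$, and $(b_{\mathcal S_X}(\tau^c))$ makes $\{g(n,x):n\in\mathbb N\}$ a neighborhood base at $x$. Condition $(S)$ gives the symmetry $y\in g(n,x)\Leftrightarrow x\in g(n,y)$, and, exactly as in the proof of Theorem \ref{T3}, condition $(E)$ gives the strong triangle law: $y\in g(n,x)$ and $z\in g(n,y)$ imply $z\in g(n,x)$. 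The key step is to combine these two facts: symmetry together with the strong triangle law shows that for each fixed $n$ the relation $x\sim_n y\Leftrightarrow y\in g(n,x)$ is an equivalence relation whose classes are precisely the (cl)open sets $g(n,x)$, and $g(n+1,x)\subset g(n,x)$ makes the level-$(n+1)$ partition refine the level-$n$ one.

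Finally I would manufacture the ultrametric directly. For $x\neq y$ put $m(x,y)=\max\{n\ge 0:x\sim_n y\}$ (finite since the $g(n,x)$ form a neighborhood base, with $\sim_0$ declared trivial), and define $d(x,y)=2^{-m(x,y)}$, $d(x,x)=0$. Symmetry of $d$ is immediate from symmetry of $\sim_n$, and $d(x,y)=0\Leftrightarrow x=y$. The ultrametric inequality $d(x,z)\le\max\{d(x,y),d(y,z)\}$ reduces to $m(x,z)\ge\min\{m(x,y),m(y,z)\}$, which follows from transitivity of $\sim_k$ for $k=\min\{m(x,y),m(y,z)\}$ together with the nesting of the partitions. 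Since $B(x,2^{-n})=g(n+1,x)$, these balls form a neighborhood base, so $d$ is compatible and $X$ is ultrametrizable. The main obstacle is the equivalence-relation step: it is precisely the interaction of $(S)$ and $(E)$ that upgrades the $o$-semimetric of Theorem \ref{T8} to an honest ultrametric, and one must be careful that $(E)$, read through $g$, yields $g(n,y)\subset g(n,x)$ in the correct direction so that symmetry then forces $g(n,x)=g(n,y)$ and hence transitivity.
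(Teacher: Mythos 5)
Your proposal is correct, and two of its three implications coincide with the paper's: for $(a)\Rightarrow(b)$ you use exactly the map from Theorem \ref{T8}, and your verification of $(E)$ via the non-increasing function $f$ is just a rephrasing of the paper's contradiction argument (assuming $(E)$ fails forces $d(x,y)<d(x,z)$ and $d(y,z)<d(x,z)$, contradicting the strong triangle inequality); your $(b)\Rightarrow(c)$ is the same restriction-plus-$(m_{\tau^c})$-plus-$(S)$ argument the paper invokes by reference to Theorem \ref{T8}. The genuine difference is in $(c)\Rightarrow(a)$. The paper stops once it has the $g$-function $g(n,x)=\phi_x^{-1}[Y\setminus B_n]$ with the three properties (neighborhood base, symmetry $y\in g(n,x)\Leftrightarrow x\in g(n,y)$, and $y\in g(n,x)\Rightarrow g(n,y)\subset g(n,x)$) and then cites the characterization of ultrametrizability by symmetric $g$-functions from \cite{Go}. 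You instead make the proof self-contained: you observe that symmetry plus the $(E)$-derived transitivity turn each relation $y\in g(n,x)$ into an equivalence relation with clopen classes $g(n,x)$, nested as $n$ grows, and you build the ultrametric $d(x,y)=2^{-m(x,y)}$ directly from this filtration of partitions, checking $B(x,2^{-n})=g(n+1,x)$ so that $\tau_d=\tau$. This construction is sound (finiteness of $m(x,y)$ follows from $(G_{\mathcal S_X})$, i.e., $\bigcap_n g(n,x)=\{x\}$, and the strong triangle inequality reduces to transitivity of $\sim_k$ at $k=\min\{m(x,y),m(y,z)\}$). What the paper's route buys is brevity; what yours buys is independence from the external theorem of \cite{Go} and an explicit exhibition of how $(S)$ and $(E)$ jointly produce the non-Archimedean metric, which is arguably the real content of the equivalence.
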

\begin{proof}
(a) $\Rightarrow$ (b) Let $d$ be a compatible metric on $X$. Define a map $\phi:\tau^c\rightarrow\mathcal L(X,\mathcal F(Y))$ as that in the proof of (a) $
\Rightarrow$ (b) of Theorem \ref{T8}. Then $\phi$ satisfies $(G_{\tau^c})$, $(m_{\tau^c})$ and $(S)$.

Let $x,y,z$ be pairwise distinct points of $X$. Assume that $\phi_x(y)\nsubseteq\phi_x(z)$ and $\phi_y(z)\nsubseteq\phi_x(z)$. Then $n_x(z)<n_x(y)$ and $n_x(z)<n_
y(z)$. Thus $y\in U(n_x(z),\{x\})=B(x,\frac{1}{n_x(z)})$ and $z\in U(n_x(z),\{y\})=B(y,\frac{1}{n_x(z)})$ from which it follows that $d(x,z)\leq\max\{d(x,y),d(y,z)
\}<\frac{1}{n_x(z)}$. Hence $z\in B(x,\frac{1}{n_x(z)})=U(n_x(z),\{x\})$, a contradiction.

(b) $\Rightarrow$ (c) Similar to the proof of (b) $\Rightarrow$ (c) of Theorem \ref{T8}.

(c) $\Rightarrow$ (a) Assume (c). Define a $g$-function for $X$ by letting $g(n,x)=\phi_x^{-1}[Y\setminus B_n]$ for each $x\in X$ and $n\in\mathbb{N}$. By $(S)$, we
have that $y\in g(n,x)$ if and only if $x\in g(n,y)$. By $(b_{\mathcal S_X}(\tau^c))$, $\{g(n,x):n\in\mathbb{N}\}$ is a neighborhood base of $x$. By
$(E)$, we have that if $y\in g(n,x)$, then $g(n,y)\subset g(n,x)$. Therefore, $X$ is ultrametrizable \cite{Go}.
\end{proof}

A space $X$ is called a Nagata-space \cite{Ho} if there exists a $g$-function $g$ for $X$ such that if $g(n,x)\cap g(n,x_n)\neq\emptyset$ for all $n\in\mathbb{N}$,
then $x$ is a cluster point of $\langle x_n\rangle$.

\begin{Thm}\label{T10}
For a space $X$, the following are equivalent.

$(a)$ $X$ is a Nagata-space.

$(b)$ There exists a map $\phi:\tau^c\rightarrow\mathcal L(X,\mathcal F(Y))$ satisfying $(G_{\tau^c})$, $(m_{\tau^c})$, $(b_{\mathcal S_X}(\tau^c))$ and $(\diamond)
$ $\phi_F$ is locally bounded on $X\setminus F$.

$(c)$ There exists a map $\phi:\tau^c\rightarrow\mathcal L(X,\mathcal F(Y))$ satisfying $(G_{\tau^c})$, $(m_{\tau^c})$, $(b_{\mathcal S_X}(\tau^c))$ and $(b_{\tau^
c}(\mathcal C_X))$.
\end{Thm}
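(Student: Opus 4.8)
The plan is to establish the cycle $(a)\Rightarrow(b)\Rightarrow(c)\Rightarrow(a)$, using Lemma~\ref{L1} for the constructions and, throughout, the translation between a map $\phi\colon\tau^c\to\mathcal L(X,\mathcal F(Y))$ and the decreasing family of open sets $\rho(n,F)=\phi_F^{-1}[Y\setminus B_n]$ already exploited in Theorems~\ref{T4} and~\ref{T5}.

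For $(a)\Rightarrow(b)$ let $g$ be a $g$-function witnessing that $X$ is a Nagata-space and write $g(n,F)=\bigcup_{y\in F}g(n,y)$ for $F\in\tau^c$. I would first extract three consequences of the Nagata hypothesis, each proved by the same mechanism --- if the conclusion fails one builds a sequence $\langle x_n\rangle$ and points $z_n$ with $z_n\in g(n,x)\cap g(n,x_n)$, whence $x$ must cluster at $\langle x_n\rangle$ --- namely (i) $\{g(n,x):n\in\mathbb N\}$ is a neighborhood base at $x$; (ii) $\bigcap_n g(n,F)=F$; and (iii) $\bigcap_n\overline{g(n,F)}=F$. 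Applying Lemma~\ref{L1} with the decreasing open sets $G(n,F)=g(n,F)$ (using (ii) and the evident monotonicity) produces $\phi$ satisfying $(G_{\tau^c})$ and $(m_{\tau^c})$. Then $(b_{\mathcal S_X}(\tau^c))$ follows from (i): for $x\notin F$ some $g(m,x)$ misses $F$, so $\phi_x(y)\subset B_m$ for all $y\in F$; and $(\diamond)$ follows from (iii) exactly as in Theorem~\ref{T5}, taking $O=X\setminus\overline{g(m,F)}$ for a suitable $m$.

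For $(b)\Rightarrow(c)$ the same map works, so I only need $(\diamond)\Rightarrow(b_{\tau^c}(\mathcal C_X))$. If $F\in\tau^c$, $K\in\mathcal C_X$ and $K\cap F=\emptyset$, then $K\subset X\setminus F$, so each point of $K$ has a neighborhood on which $\phi_F$ is bounded; a finite subcover together with the largest of the finitely many indices (recall $B_i\subset B_j$ for $i\le j$) bounds $\phi_F$ on all of $K$.

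The step $(c)\Rightarrow(a)$ is the one I expect to be the crux. Set $g(n,x)=\phi_x^{-1}[Y\setminus B_n]$; this is a $g$-function, and by $(G_{\mathcal S_X})$ and $(b_{\mathcal S_X}(\tau^c))$ the sets $\{g(n,x)\}$ form a decreasing neighborhood base at $x$, as in Proposition~\ref{P2}. To check the Nagata condition, assume $g(n,x)\cap g(n,x_n)\neq\emptyset$ and pick $z_n\in g(n,x)\cap g(n,x_n)$; since the base is decreasing, $z_n\to x$. If $x$ were not a cluster point of $\langle x_n\rangle$, some open $U\ni x$ would contain $x_n$ for only finitely many $n$; put $F=\overline{\{x_n:x_n\notin U\}}$, so $F\in\tau^c$ and $x\notin F$. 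For each surviving index $(m_{\tau^c})$ gives $z_n\in g(n,x_n)=\rho(n,\{x_n\})\subset\rho(n,F)$, and since $z_n\to x\notin F$ only finitely many $z_n$ lie in $F$; discarding these, $K=\{z_n\}\cup\{x\}$ is a compact set disjoint from $F$. Now $(b_{\tau^c}(\mathcal C_X))$ supplies $m$ with $K\cap\rho(m,F)=\emptyset$, while any surviving $z_n$ with $n\ge m$ satisfies $z_n\in\rho(n,F)\subset\rho(m,F)$ (as $\rho$ is decreasing in $n$) --- a contradiction. The delicate points here are choosing $K=\{z_n\}\cup\{x\}$ and $F$ so that the compact--closed boundedness $(b_{\tau^c}(\mathcal C_X))$ clashes with monotonicity, and the bookkeeping needed to discard the finitely many stray indices; everything else is routine.
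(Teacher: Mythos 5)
Your proposal is correct, and for two of the three implications it coincides with the paper's argument: your $(a)\Rightarrow(b)$ fills in exactly the facts the paper dismisses as ``easy to verify'' (that $\{g(n,x)\}$ is a neighborhood base, that $\bigcap_n g(n,F)=F$ and $\bigcap_n\overline{g(n,F)}=F$, all via the same cluster-point mechanism) and then invokes Lemma~\ref{L1} and the Theorem~\ref{T5} argument for $(\diamond)$, and your $(b)\Rightarrow(c)$ is the paper's finite-subcover argument verbatim. Where you genuinely diverge is $(c)\Rightarrow(a)$: the paper disposes of it by citation --- Proposition~\ref{P2} gives first countability, Theorem~\ref{T4} gives $k$-semi-stratifiability, and then the external result of Mohamad \cite{Mm} (first countable plus $k$-semi-stratifiable implies Nagata) finishes --- whereas you verify the Nagata condition for $g(n,x)=\phi_x^{-1}[Y\setminus B_n]$ directly, by playing $(b_{\tau^c}(\mathcal C_X))$ applied to the compact set $K=\{z_n\}\cup\{x\}$ against the monotonicity $(m_{\tau^c})$ applied to $\{x_n\}\subset F$. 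Your clash is sound: for a surviving index $n\ge m$ one gets $z_n\in\phi_{x_n}^{-1}[Y\setminus B_n]\subset\phi_F^{-1}[Y\setminus B_m]$ while $z_n\in K$ forces $z_n\notin\phi_F^{-1}[Y\setminus B_m]$. In effect you reprove the relevant case of \cite{Mm} inside the paper's own framework; what this buys is a self-contained theorem with no appeal to the metrization literature, at the cost of the bookkeeping, while the paper's route buys brevity. One assertion you should justify: that $x\notin F=\overline{\{x_n:x_n\notin U\}}$. It is true, but only because $x$ is assumed not to be a cluster point of $\langle x_n\rangle$: since $x\in U$, $x$ is not in the set itself, so if $x$ lay in its closure it would be a limit point of that set, and in a $T_1$ space every neighborhood of a limit point contains infinitely many distinct points of the set, hence contains $x_n$ for infinitely many $n$ --- contradicting the choice of $U$. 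With that one line added, your proof is complete.
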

\begin{proof}
(a) $\Rightarrow$ (b) Let $g$ be the $g$-function for a Nagata-space. It is easy to verify that $\bigcap_{n\in\mathbb{N}}\overline{g(n,F)}=F$ for each $F\in\tau^c$
and that $\{g(n,x):n\in\mathbb{N}\}$ is a neighborhood base of $x$. For each $F\in\tau^c$ and $x\notin F$, let $n_F(x)=\min\{n\in\mathbb{N}:x\notin g(n,F)\}$. For
each $F\in\tau^c$, define a map $\phi_F$ by letting $\phi_F(x)=Y$ whenever $x\in F$ and $\phi_F(x)=B_{n_F(x)}$ whenever $x\notin F$. Then $\phi:\tau^c\rightarrow
\mathcal L(X,\mathcal F(Y))$ satisfies $(G_{\tau^c})$, $(m_{\tau^c})$, $(b_{\mathcal S_X}(\tau^c))$. Similar to the proof of the necessity of Theorem \ref{T5}, we
can show that $\phi_F$ satisfies $(\diamond)$.

(b) $\Rightarrow$ (c) Assume (b). Let $F\in\tau^c$, $K\in{\mathcal C}_X$ and $F\cap K=\emptyset$. By $(\diamond)$, for each $x\in K$, there exist an open
neighborhood $O_x$ of $x$ and $n_x\in\mathbb{N}$ such that $O_x\subset\phi^{\#}_F[B_{n_x}]$. Since $K\in{\mathcal C}_X$, there exists a finite subset $A$ of $K$
such that $K\subset\cup\{O_x:x\in A\}$. Let $m=\max\{n_x:x\in A\}$. For each $x\in K$, there is $y\in A$ such that $x\in O_y\subset\phi^{\#}_F[B_{n_y}]
\subset\phi^{\#}_F[B_m]$. This implies that $\phi_F$ is bounded on $K$.

(c) $\Rightarrow$ (a) Assume (c). Then by Proposition \ref{P2}, $X$ is first countable. By Theorem \ref{T4}, $X$ is $k$-semi-stratifiable. Thus $X$ is a Nagata
space \cite{Mm}.
\end{proof}

A function $d:X\times X\rightarrow [0,\infty)$ is called a quasi-metric on $X$ if (i) $d(x,y)=0$ if and only if $x=y$; (ii) for all $x,y,z\in X$, $d(x,z)\leq d(
x,y)+d(y,z)$. Let $d^{-1}(x,y)=d(y,x)$ for all $x,y\in X$. Then $d^{-1}$ is also a quasi-metric on $X$. For each $x\in X$ and $r>0$, let $B_d(x,r)=\{y\in X:d(x,
y)<r\}$ and $B_{d^{-1}}(x,r)=\{y\in X:d^{-1}(x,y)<r\}$. Then $\{B_d(x,r):x\in X,r>0\}$ ($\{B_{d^{-1}}(x,r):x\in X,r>0\}$) is a base for a topology $\tau_d$ ($
\tau_{d^{-1}}$) on $X$. If $\tau_d\subset\tau_{d^{-1}}$, then $d$ is called a strong quasi-metric \cite{St}. A space $(X,\tau)$ is called strongly
quasi-metrizable \cite{Ku} if there exists a strong quasi-metric $d$ on $X$ such that $\tau=\tau_d$.

\begin{Thm}\label{T11}
A space $X$ is strongly quasi-metrizable if and only if there exists a map $\phi:\tau^c\rightarrow\mathcal L(X,\mathcal F(Y))$ satisfying $(G_{\tau^c})$,
$(m_{\tau^c})$ and $(b_{\mathcal C_X}(\tau^c))$.
\end{Thm}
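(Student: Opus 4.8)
The plan is to prove both implications by the neighbourhood-construction used throughout the paper, reducing the sufficiency to a known metrization-type equivalence for the last step.

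For necessity, let $d$ be a strong quasi-metric on $X$ with $\tau=\tau_d$. For each $F\in\tau^c$ and $n\in\mathbb{N}$ put $G(n,F)=\bigcup_{x\in F}B_d(x,\frac{1}{n})$. The triangle inequality shows each ball $B_d(x,r)$ is $\tau_d$-open (if $d(x,y)<r$ then $B_d(y,r-d(x,y))\subset B_d(x,r)$), so $G(n,F)$ is open; it is clearly decreasing in $n$ and satisfies $G(n,F_1)\subset G(n,F_2)$ whenever $F_1\subset F_2$. The place where the word ``strong'' enters is the identity $\bigcap_{n\in\mathbb{N}}G(n,F)=F$: if $y$ lies in the intersection, choose $x_n\in F$ with $d(x_n,y)<\frac{1}{n}$; then $d^{-1}(y,x_n)\to 0$, so $x_n\to y$ in $\tau_{d^{-1}}$, and since $\tau_d\subset\tau_{d^{-1}}$ also $x_n\to y$ in $\tau_d$, whence $y\in F$ because $F$ is $\tau_d$-closed and $\tau_d$ is first countable. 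Thus Lemma \ref{L1} applies with these $G(n,F)$ and yields a map $\phi:\tau^c\rightarrow\mathcal L(X,\mathcal F(Y))$ satisfying $(G_{\tau^c})$ and $(m_{\tau^c})$.

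It remains to check $(b_{\mathcal C_X}(\tau^c))$. Let $K\in\mathcal C_X$ and $F\in\tau^c$ with $K\cap F=\emptyset$. For each $x\in K$ pick $r_x>0$ with $B_d(x,r_x)\cap F=\emptyset$, cover $K$ by finitely many $B_d(x_i,\frac{r_{x_i}}{2})$, and set $\delta=\min_i\frac{r_{x_i}}{2}>0$. If $x\in K$ and $y\in F$, then $x\in B_d(x_i,\frac{r_{x_i}}{2})$ for some $i$ and $d(x_i,y)\ge r_{x_i}$, so by the triangle inequality $d(x,y)\ge d(x_i,y)-d(x_i,x)>\frac{r_{x_i}}{2}\ge\delta$. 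Choosing $m$ with $\frac{1}{m}<\delta$ gives $G(m,K)\cap F=\emptyset$, and by the construction in Lemma \ref{L1} this means $F\subset\phi_K^{\#}[B_m]$, i.e. $\phi_K$ is bounded on $F$.

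For sufficiency, let $\phi$ satisfy $(G_{\tau^c})$, $(m_{\tau^c})$ and $(b_{\mathcal C_X}(\tau^c))$. By Proposition \ref{P3}, $X$ is semi-stratifiable. Define a $g$-function by $g(n,x)=\phi_x^{-1}[Y\setminus B_n]$; it is open by lower semi-continuity, contains $x$ by $(G_{\tau^c})$, and is decreasing since $B_n\subset B_{n+1}$. Given $K\in\mathcal C_X$ and $F\in\tau^c$ with $K\cap F=\emptyset$, $(b_{\mathcal C_X}(\tau^c))$ provides $m$ with $F\subset\phi_K^{\#}[B_m]$; since $(m_{\tau^c})$ gives $\phi_x\subset\phi_K$ for every $x\in K$, we obtain $\phi_x(y)\subset B_m$ for all $x\in K$ and $y\in F$, that is $F\cap g(m,K)=\emptyset$. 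By Lemma \ref{L2}, $X$ is a $\gamma$-space. Finally, a semi-stratifiable $\gamma$-space is strongly quasi-metrizable (this is the converse of the implication established in the necessity part, and is the classical characterization I would cite), so $X$ is strongly quasi-metrizable. The step I expect to be the real obstacle is exactly this last sentence: passing from the purely combinatorial data (a semi-stratification together with the $\gamma$-space $g$-function) to an honest strong quasi-metric $d$ realizing $\tau=\tau_d$ with $\tau_d\subset\tau_{d^{-1}}$. I would handle it by invoking the known equivalence; were it to be reproved in place, the crux would be a geometric-series (chaining) construction of $d$ from the nested neighbourhoods $g(n,x)$ that simultaneously forces the neighbourhood-base property and the inclusion $\tau_d\subset\tau_{d^{-1}}$, while keeping the triangle inequality intact under the symmetrization the latter requires.
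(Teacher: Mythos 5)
Your proposal is correct and follows essentially the same route as the paper's proof: for necessity, the quasi-metric balls $U(n,F)=\bigcup_{x\in F}B_d(x,\frac{1}{n})$ are fed into Lemma \ref{L1}, with $\tau_d\subset\tau_{d^{-1}}$ used exactly where you use it, and for sufficiency, semi-stratifiability comes from Proposition \ref{P3}, the $\gamma$-space property via Lemma \ref{L2}, and the conclusion from K\"unzi's theorem. The only (harmless) local differences are that you verify $(b_{\mathcal C_X}(\tau^c))$ by a finite-subcover argument with the directed triangle inequality, where the paper instead argues by contradiction via a cluster point of a sequence in $K$, and that you inline the $\gamma$-space verification that the paper delegates to Theorem \ref{T1} (more precisely, to Theorem \ref{T2}, which is the version matching the unprimed condition $(b_{\mathcal C_X}(\tau^c))$).
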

\begin{proof}
Let $d$ be a strong quasi-metric on $X$ such that $\tau=\tau_d$. For each $F\in\tau^c$ and $n\in\mathbb{N}$, let $U(n,F)=\cup\{B_d(x,\frac{1}{n}):x\in F\}$. Then
$U(n,F)$ is open and $F\subset U(n,F)$. Let $x\in\bigcap_{n\in\mathbb{N}}U(n,F)$. Then there exists $x_n\in F$ such that $x\in B_d(x_n,\frac{1}{n})$ from which it
follows that $x_n\in B_{d^{-1}}(x,\frac{1}{n})$ for each $n\in\mathbb{N}$. Thus $x_n\rightarrow x$ in $\tau_{d^{-1}}$. Since $\tau_d\subset\tau_{d^{-1}}$, we have
that $x_n\rightarrow x$ in $\tau_d$. Thus $x\in F$ and so $\bigcap_{n\in\mathbb{N}}U(n,F)\subset F$. Therefore, $\bigcap_{n\in\mathbb{N}}U(n,F)=F$.

For each $F\in\tau^c$ and $x\notin F$, let $n_F(x)=\min\{n\in\mathbb{N}:x\notin U(n,F)\}$. For each $F\in\tau^c$, define a map $\phi_F:X\rightarrow\mathcal F(Y)$
by letting $\phi_F(x)=Y$ whenever $x\in F$ and $\phi_F(x)=B_{n_F(x)}$ whenever $x\notin F$. Then $\phi:\tau^c\rightarrow\mathcal L(X,\mathcal F(Y))$ satisfies $(
G_{\tau^c})$ and $(m_{\tau^c})$

Let $F\in\tau^c$, $K\in{\mathcal C}_X$ and $K\cap F=\emptyset$. We show that $F\cap U(m,K)=\emptyset$ for some $m\in\mathbb{N}$. Assume that $F\cap U(n,K)\neq
\emptyset$ for each $n\in\mathbb{N}$. Then there exist $x_n\in F,y_n\in K$ such that $x_n\in B_d(y_n,\frac{1}{n})$. Since $K\in{\mathcal C}_X$, $\langle y_n\rangle$
has a cluster point $x$ in $K$. Then there exists a subsequence $\langle y_{n_k}\rangle$ of $\langle y_n\rangle$ such that $y_{n_k}\in B_d(x,\frac{1}{k})$ for each
$k\in\mathbb{N}$. Since $x_{n_k}\in B_d(y_{n_k},\frac{1}{k})$, we have that $d(x,x_{n_k})\leq d(x,y_{n_k})+d(y_{n_k},x_{n_k})<\frac{2}{k}$ for each $k\in\mathbb{N}$.
It follows that $x_{n_k}\rightarrow x$ and thus $x\in F$, a contradiction to that $K\cap F=\emptyset$. Therefore, $F\cap U(m,K)=\emptyset$ for some $m\in\mathbb{N}$.
For each $x\in F$, $x\notin U(m,K)$ from which it follows that $n_K(x)\leq m$ and thus $\phi_K(x)=B_{n_K(x)}\subset B_m$. This implies that $\phi_K$ is bounded on $F$.

Conversely, by Proposition \ref{P3}, $X$ is semi-stratifiable. By Theorem \ref{T1}, $X$ is a $\gamma$-space. Thus $X$ is strongly quasi-metrizable \cite{Ku}.
\end{proof}

\begin{Thm}\label{T12}
For a space $X$, the following are equivalent.

$(a)$ $X$ is metrizable.

$(b)$ There exists a map $\phi:\tau^c\rightarrow\mathcal L(X,\mathcal F(Y))$ satisfying $(G_{\tau^c})$, $(m_{\tau^c})$, $(S)$ and $(b_{\mathcal C_X}(\tau^c))$.

$(c)$ There exists a map $\psi:\tau^c\rightarrow\mathcal L(X,\mathcal F(Y))$ satisfying $(G_{\tau^c})$, $(m_{\tau^c})$, $(S)$ and $(b_{\tau^c}(\mathcal C_X))$.

$(d)$ There exists a map $\varphi:\tau^c\rightarrow\mathcal L(X,\mathcal F(Y))$ satisfying $(G_{\tau^c})$, $(m_{\tau^c})$, $(b_{\tau^c}(\mathcal C_X))$ and $(b_{
\mathcal C_X}(\tau^c))$.
\end{Thm}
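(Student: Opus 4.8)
The plan is to prove all three equivalences at once, by establishing $(a)\Rightarrow(b),(c),(d)$ with a single construction and then reducing each converse to the generalized metric characterizations already proved. For $(a)\Rightarrow(b),(c),(d)$, fix a compatible metric $d$ on $X$ and, for $F\in\tau^c$ and $n\in\mathbb{N}$, put $U(n,F)=\bigcup\{B(x,\frac1n):x\in F\}$; because $d$ is a metric each ball is open, so $U(n,F)$ is open, $F\subset U(n,F)$, and $\bigcap_nU(n,F)=F$ since $d(x,F)=0$ forces $x\in\overline F=F$. Setting $\phi_F(x)=Y$ for $x\in F$ and $\phi_F(x)=B_{n_F(x)}$ with $n_F(x)=\min\{n:x\notin U(n,F)\}$ for $x\notin F$, Lemma~\ref{L1} yields $(G_{\tau^c})$ and $(m_{\tau^c})$. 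Symmetry of $d$ gives $n_{\{x\}}(y)=n_{\{y\}}(x)$, hence $(S)$. Finally, for disjoint $K\in\mathcal C_X$ and $F\in\tau^c$ one has $d(K,F)>0$, so choosing $m$ with $\frac1m<d(K,F)$ shows at the same time that $\phi_K$ is bounded on $F$ and $\phi_F$ is bounded on $K$, giving both $(b_{\mathcal C_X}(\tau^c))$ and $(b_{\tau^c}(\mathcal C_X))$. Thus this one map witnesses $(b)$, $(c)$ and $(d)$ simultaneously.

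For the converses I would first record two elementary reductions among the conditions. Since $\mathcal S_X\subset\mathcal C_X$, any $\phi$ satisfying $(b_{\mathcal C_X}(\tau^c))$ also satisfies $(b_{\mathcal S_X}(\tau^c))$. Moreover, exactly as in the proof of $(b)\Rightarrow(c)$ of Theorem~\ref{T8}, the conditions $(G_{\tau^c})$, $(m_{\tau^c})$ and $(S)$ together force $(b_{\mathcal S_X}(\tau^c))$: if $x\notin F$ then $(G_{\tau^c})$ gives $\phi_F(x)\subset B_m$ for some $m$, and then for $y\in F$ one has $\phi_x(y)=\phi_y(x)\subset\phi_F(x)\subset B_m$ by $(S)$ and $(m_{\tau^c})$. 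With these at hand, the strategy is to place $X$ in the intersection of two of the classes characterized earlier and invoke the corresponding classical metrization theorem.

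Concretely, for $(b)$ and $(d)$ the presence of $(b_{\mathcal C_X}(\tau^c))$ lets me apply Theorem~\ref{T11} to conclude that $X$ is strongly quasi-metrizable, hence a $\gamma$-space, while the reductions above supply $(b_{\mathcal S_X}(\tau^c))$, so Theorem~\ref{T6} makes $X$ semi-metrizable; it is classical that a semi-metrizable $\gamma$-space is metrizable, which closes these two cases. For $(c)$ the available boundedness hypothesis is instead $(b_{\tau^c}(\mathcal C_X))$, so I apply Theorem~\ref{T4} to obtain that $X$ is $k$-semi-stratifiable and Theorem~\ref{T8} (from $(G_{\tau^c})$, $(m_{\tau^c})$, $(S)$) to obtain that $X$ is $o$-semimetrizable; an $o$-semimetrizable $k$-semi-stratifiable space, which is in effect a semimetrizable Nagata-space in the sense of Theorem~\ref{T10}, is known to be metrizable, finishing $(c)$.

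The forward construction is routine, the only point not to gloss being that $U(n,F)$ is open, which genuinely uses that $d$ is a metric rather than a semimetric. The substance lies in the converses, and the main obstacle is bookkeeping the \emph{direction} of the boundedness hypotheses: $(b_{\tau^c}(\mathcal C_X))$ and $(b_{\mathcal C_X}(\tau^c))$ are not interchangeable, and they feed into different earlier theorems — the former into the $k$-semi-stratifiable characterization (Theorem~\ref{T4}) and the latter into the strongly quasi-metrizable one (Theorem~\ref{T11}). In particular $(c)$ provides no $\gamma$-structure and so cannot be routed through the same metrization theorem as $(b)$ and $(d)$; the crux is therefore to pin down and cite the correct classical metrization result — ``semimetrizable $+$ $\gamma$'' for $(b),(d)$ and ``$o$-semimetrizable $+$ $k$-semi-stratifiable'' for $(c)$ — whose hypotheses are precisely the classes that the earlier theorems deliver.
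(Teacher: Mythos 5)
Your forward direction is correct: with a compatible metric, $d(K,F)>0$ for disjoint compact $K$ and closed $F$, so a single map built from $U(n,F)=\bigcup\{B(x,\frac1n):x\in F\}$ witnesses (b), (c) and (d) at once (the paper gets the same construction but routes it only into (b), closing a cycle instead). Your two reductions are also correct. The converses, however, break down exactly at the ``classical metrization results'' you yourself identify as the crux. The result you invoke for (b) and (d) --- \emph{a semi-metrizable $\gamma$-space is metrizable} --- is false. The Niemytzki plane (tangent disc space) is a Moore space, hence semi-stratifiable and first countable, hence semi-metrizable by Heath's theorem \cite{H}; it is quasi-metrizable, hence a $\gamma$-space; and it is not normal, hence not metrizable. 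Worse, no argument of the shape you propose can close case (b): every space-level property extractable from (b) by your reductions --- semi-metrizable (Theorem \ref{T6}), $o$-semimetrizable (Theorem \ref{T8}), $\gamma$ (Theorem \ref{T2}), strongly quasi-metrizable (Theorem \ref{T11} together with \cite{Ku}), semi-stratifiable --- is possessed by the Niemytzki plane. The hypotheses of (b) are strictly stronger than the conjunction of these properties, precisely because one map must witness $(S)$ and $(b_{\mathcal C_X}(\tau^c))$ \emph{simultaneously}; this interplay is what the paper exploits, constructing from $\phi$ a new map via $V(n,F)=\bigcup\{\phi_x^{-1}[Y\setminus B_n]:x\in F\}$ to pass from (b) to (c), then similarly from (c) to (d), and only then invoking a single classical fact (Yoshioka \cite{Yo}: $\gamma$ plus $k$-semi-stratifiable implies metrizable) for (d)$\Rightarrow$(a).

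For (d) your error is repairable within your own scheme, since (d) carries both boundedness conditions: Theorem \ref{T2} gives $\gamma$, Theorem \ref{T4} gives $k$-semi-stratifiable, and \cite{Yo} finishes --- this is exactly the paper's (d)$\Rightarrow$(a); you instead discarded $(b_{\tau^c}(\mathcal C_X))$ and leaned on the false statement above. For (c) the gap is different: you gloss ``$o$-semimetrizable $+$ $k$-semi-stratifiable'' as ``in effect a semimetrizable Nagata-space,'' but that gloss proves nothing, since \emph{every} Nagata space is semi-metrizable (first countable $+$ stratifiable) and non-metrizable Nagata spaces exist (McAuley's bow-tie space); the stronger claim that an $o$-semimetrizable $k$-semi-stratifiable space is metrizable is not a standard theorem, and you neither prove it nor cite a source. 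So as written, all three implications (b),(c),(d)$\Rightarrow$(a) have genuine gaps, and (b),(c) cannot be closed by any assembly of the space-level characterizations you use --- one must work with the witnessing map itself, as the paper's chain (b)$\Rightarrow$(c)$\Rightarrow$(d) does.
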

\begin{proof}
(a) $\Rightarrow$ (b) Let $d$ be a compatible metric on $X$. For each $F\in\tau^c$ and $n\in\mathbb{N}$, let $U(n,F)=\cup\{B(x,\frac{1}{n}):x\in F\}$. Then $\bigcap_
{n\in\mathbb{N}}U(n,F)=F$. For each $x\notin F\in\tau^c$, let $n_F(x)=\min\{n\in\mathbb{N}:x\notin U(n,F)\}$. For each $F\in\tau^c$, define a map $\phi_F:X\rightarrow
\mathcal F(Y)$ by letting $\phi_F(x)=Y$ whenever $x\in F$ and $\phi_F(x)=B_{n_F(x)}$ whenever $x\notin F$. Then $\phi:\tau^c\rightarrow\mathcal L(X,\mathcal F(Y))$
satisfies $(G_{\tau^c})$ and $(m_{\tau^c})$ and $(S)$. Similar to the proof of the necessity of Theorem \ref{T11}, we can show that $\phi$ also satisfies $(b_{
\mathcal C_X}(\tau^c))$.

(b) $\Rightarrow$ (c) Assume (b). For each $F\in\tau^c$ and $n\in\mathbb{N}$, let $V(n,F)=\cup\{\phi_x^{-1}[Y\setminus B_n]:x\in F\}$. Then $V(n,F)$ is open and $F
\subset\bigcap_{n\in\mathbb{N}}V(n,F)$.

{\bf Claim}. If $F\in\tau^c$, $K\in{\mathcal C}_X$ and $F\cap K=\emptyset$, then $K\cap V(m,F)=\emptyset$ for some $m\in\mathbb{N}$.

{\it Proof}. Let $F\in\tau^c$, $K\in{\mathcal C}_X$ and $F\cap K=\emptyset$. By $(b_{\mathcal C_X}(\tau^c))$, $F\subset\phi^{\#}_K[B_m]$ for some $m\in\mathbb{N}$.
For each $x\in K$ and $y\in F$, $\phi_y(x)=\phi_x(y)\subset\phi_K(y)\subset B_m$ which implies that $x\notin\phi_y^{-1}[Y\setminus B_m]$. Thus $x\notin V(m,F)$.
It follows that $K\cap V(m,F)=\emptyset$.

The above claim also implies that $\bigcap_{n\in\mathbb{N}}V(n,F)\subset F$. Thus $\bigcap_{n\in\mathbb{N}}V(n,F)=F$.

For each $x\notin F\in\tau^c$, let $m_F(x)=\min\{n\in\mathbb{N}:x\notin V(n,F)\}$. For each $F\in\tau^c$, define a map $\psi_F:X\rightarrow\mathcal F(Y)$ by letting
$\psi_F(x)=Y$ whenever $x\in F$ and $\psi_F(x)=B_{m_F(x)}$ whenever $x\notin F$. Then $\psi:\tau^c\rightarrow\mathcal L(X,\mathcal F(Y))$ satisfies $(G_{\tau^c})$
and $(m_{\tau^c})$.

For each $x,y\in X$ with $x\neq y$, since $\phi_x(y)=\phi_y(x)$, we have that $y\in V(n,\{x\})$ if and only if $x\in V(n,\{y\})$ for each $n\in\mathbb{N}$. It follows
that $m_{\{x\}}(y)=m_{\{y\}}(x)$ and thus $\psi_x(y)=\psi_y(x)$.

From the above claim it follows that $\psi$ satisfies $(b_{\tau^c}(\mathcal C_X))$.

(c) $\Rightarrow$ (d) Assume (c). For each $F\in\tau^c$ and $n\in\mathbb{N}$, let $W(n,F)=\cup\{\psi_x^{-1}[Y\setminus B_n]:x\in F\}$. Then $W(n,F)$ is open and $F
\subset\bigcap_{n\in\mathbb{N}}W(n,F)$. By $(b_{\tau^c}(\mathcal C_X))$, if $x\notin F$, then $\psi_F(x)\subset B_m$ for some $m\in\mathbb{N}$. Then for each $y\in
F$, $\psi_y(x)\subset\psi_F(x)\subset B_m$ from which it follows that $x\notin\psi_y^{-1}[Y\setminus B_m]$. Thus $x\notin W(m,F)$. This implies that $\bigcap_{n\in
\mathbb{N}}W(n,F)\subset F$ and hence $F=\bigcap_{n\in\mathbb{N}}W(n,F)$.

For each $x\notin F\in\tau^c$, let $i_F(x)=\min\{n\in\mathbb{N}:x\notin W(n,F)\}$. For each $F\in\tau^c$, define $\varphi_F:X\rightarrow\mathcal F(Y)$ by letting
$\varphi_F(x)=Y$ whenever $x\in F$ and $\varphi_F(x)=B_{i_F(x)}$ whenever $x\notin F$. Then $\varphi:\tau^c\rightarrow\mathcal L(X,\mathcal F(Y))$ satisfies $(G_{
\tau^c})$ and $(m_{\tau^c})$.

Let $F\in\tau^c$, $K\in{\mathcal C}_X$ and $F\cap K=\emptyset$. By $(b_{\tau^c}(\mathcal C_X))$, $K\subset\psi^{\#}_F[B_m]$ for some $m\in\mathbb{N}$. For each $x
\in K$ and $y\in F$, $\psi_y(x)\subset\psi_F(x)\subset B_m$ which implies that $x\notin\psi_y^{-1}[Y\setminus B_m]$. Thus $x\notin W(m,F)$ and so $i_F(x)\leq m$.
Consequently, $\varphi_F(x)=B_{i_F(x)}\subset B_m$ for each $x\in K$. This implies that $\varphi_F$ is bounded on $K$. Also, since $\psi_x(y)=\psi_y(x)$, we have
that $y\notin\psi_x^{-1}[Y\setminus B_m]$ for each $x\in K$ and $y\in F$ from which it follows that $y\notin W(m,K)$ and so $i_K(y)\leq m$. Consequently, $\varphi_
K(y)=B_{i_K(y)}\subset B_m$ for each $y\in F$. This implies that $\varphi_K$ is bounded on $F$.

(d) $\Rightarrow$ (a) By Theorem \ref{T1}, $X$ is a $\gamma$-space. By Theorem \ref{T4}, $X$ is a $k$-semi-stratifiable space. Therefore, $X$ is metrizable \cite{Yo}.
\end{proof}

\section{An application}
As an application, in this section, we use the results obtained in the previous sections to give characterizations of the corresponding spaces with
real-valued functions.

Let $\mathbb R$ be the real number set. $\overline{\mathbb R}=\mathbb R\cup\{-\infty,+\infty\}$ is called the generalized real number set. The order $<_{\overline
{\mathbb R}}$ on $\overline{\mathbb R}$ is defined as follows: $<_{\overline{\mathbb R}}=<_{\mathbb R}\cup\{\langle-\infty,r\rangle,\langle r,+\infty\rangle,r\in
\mathbb R\}\cup\{\langle-\infty,+\infty\rangle\}$. When viewed as a topological space, the topology on $\overline{\mathbb R}$ takes the collection $\{[-\infty,a),
(a,b),(a,+\infty]:a,b\in\mathbb R\}$ as a base.

A generalized real-valued function $f:X\rightarrow\overline{\mathbb R}$ is called lower (upper) semi-continuous \cite{T}, if for each $r\in\mathbb R$, the set
$\{x\in X:f(x)>r\}$ ($\{x\in X:f(x)<r\}$) is open.

The following lemmas provide methods of turning a lower semi-continuous set-valued map $\phi:X\rightarrow\mathcal F(\overline{\mathbb R})$ into a lower
semi-continuous function $f:X\rightarrow\overline{\mathbb R}$ and of the converse.

\begin{Lem}\label{L3}
If a set-valued map $\phi:X\rightarrow\mathcal F(\overline{\mathbb R})$ is lower semi-continuous, then the generalized real-valued function $f:X\rightarrow
\overline{\mathbb R}$ defined by letting $f(x)=\sup(\phi(x)\setminus\{+\infty\})$ for each $x\in X$, is lower semi-continuous.
\end{Lem}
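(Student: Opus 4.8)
The plan is to verify the lower semi-continuity of $f$ directly from the definition by unwinding what the superlevel sets $\{x\in X:f(x)>r\}$ look like. Fix $r\in\mathbb R$; I want to show this set is open. The key observation is that $f(x)=\sup(\phi(x)\setminus\{+\infty\})>r$ should be equivalent to the statement that $\phi(x)$ meets some open ray of $\overline{\mathbb R}$ lying strictly above $r$, and the preimage of such a ray under a lower semi-continuous set-valued map is open by definition of lower semi-continuity.

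More precisely, first I would show the set equality
\[
\{x\in X:f(x)>r\}=\bigcup_{s>r}\phi^{-1}[(s,+\infty]].
\]
For the inclusion $\subset$: if $\sup(\phi(x)\setminus\{+\infty\})>r$, then since the supremum exceeds $r$ there is a finite point $t\in\phi(x)$ with $t>r$; choosing $s$ with $r<s<t$ gives $t\in(s,+\infty]$, so $x\in\phi^{-1}[(s,+\infty]]$. For the reverse inclusion $\supset$: if $\phi(x)\cap(s,+\infty]\neq\emptyset$ for some $s>r$, pick $p\in\phi(x)$ with $p>s>r$. If $p$ is finite it contributes to the supremum directly; if $p=+\infty$, then because $\phi(x)$ is closed (here I use that $\phi$ maps into $\mathcal F(\overline{\mathbb R})$) and $+\infty$ is not isolated from below in $\overline{\mathbb R}$, the set $\phi(x)$ must contain finite points arbitrarily close to $+\infty$, in particular some finite point exceeding $r$. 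Either way $f(x)=\sup(\phi(x)\setminus\{+\infty\})>r$.

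Once the displayed equality is established, each $\phi^{-1}[(s,+\infty]]$ is open because $(s,+\infty]$ is an open subset of $\overline{\mathbb R}$ and $\phi$ is lower semi-continuous, so the superlevel set is a union of open sets and hence open; this holds for every $r\in\mathbb R$, giving lower semi-continuity of $f$.

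The main obstacle is the $p=+\infty$ case in the reverse inclusion: one must rule out the pathological situation where $\phi(x)=\{+\infty\}$ (or more generally where $+\infty\in\phi(x)$ but $\phi(x)$ has no finite points above $r$), since there $\phi(x)\setminus\{+\infty\}$ could be empty and $\sup\emptyset$ is problematic. The closedness of $\phi(x)$ is exactly what prevents an isolated $+\infty$ sitting above a gap, forcing finite approximants from below; I would make this the crux of the argument and handle the boundary conventions (such as $\sup\emptyset=-\infty$) carefully so that the equivalence holds without exception.
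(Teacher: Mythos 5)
Your overall strategy (realize the superlevel sets of $f$ as preimages of open rays under $\phi$) is the right one, but the step you yourself identify as the crux is false. You claim that if $+\infty\in\phi(x)$ then, because $\phi(x)$ is closed in $\overline{\mathbb R}$, the set $\phi(x)$ ``must contain finite points arbitrarily close to $+\infty$.'' This is not so: $\overline{\mathbb R}$ is homeomorphic to $[0,1]$, so finite sets are closed, and $\{+\infty\}$ or $\{0\}\cup\{+\infty\}$ are perfectly good members of $\mathcal F(\overline{\mathbb R})$ in which $+\infty$ is an isolated point. Concretely, take the constant map $\phi(x)=\{0\}\cup\{+\infty\}$ for all $x\in X$; it is lower semi-continuous (every preimage is $\emptyset$ or $X$), and $f\equiv 0$. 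For $r=1$ your claimed identity
\[
\{x\in X:f(x)>1\}=\bigcup_{s>1}\phi^{-1}[(s,+\infty]]
\]
fails badly: the left side is empty, while the right side is all of $X$, since $+\infty\in\phi(x)\cap(s,+\infty]$ for every $x$ and every $s$. So the inclusion $\supset$ is genuinely wrong whenever $+\infty$ sits isolated above the finite part of some $\phi(x)$, and no appeal to closedness can repair it; the lemma is still true for such $\phi$, but your argument does not prove it.

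The repair is to use rays of \emph{finite} reals: $(r,+\infty)=\bigcup_{b>r}(r,b)$ is open in $\overline{\mathbb R}$, and one has the exact identity $\{x\in X:f(x)>r\}=\phi^{-1}[(r,+\infty)]$, both inclusions being immediate from the definition of the supremum --- $f(x)>r$ if and only if $\phi(x)\setminus\{+\infty\}$ contains a point greater than $r$, i.e., if and only if $\phi(x)$ meets $(r,+\infty)$. Lower semi-continuity of $\phi$ then gives openness at once. This is exactly the paper's proof: it never touches the ray $(r,+\infty]$, so the problematic $p=+\infty$ case that you tried (and failed) to handle never arises.
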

\begin{proof}
Let $r\in\mathbb R$ and $f(x)>r$. Then $(\phi(x)\setminus\{+\infty\})\cap(r,+\infty]\neq\emptyset$ and thus $\phi(x)\cap(r,+\infty)\neq\emptyset$. Since $\phi$
is lower semi-continuous, $U=\phi^{-1}[(r,+\infty)]$ is an open neighborhood of $x$. For each $y\in U$, $\phi(y)\cap(r,+\infty)\neq\emptyset$ from which it
follows that $f(y)=\sup(\phi(y)\setminus\{+\infty\})>r$. Therefore, $f$ is lower semi-continuous.
\end{proof}

\begin{Lem}\label{L4}
A generalized real-valued function $f:X\rightarrow\overline{\mathbb R}$ is lower semi-continuous if and only if the set-valued map $\varphi:X\rightarrow\mathcal
F(\overline{\mathbb R})$ defined by letting $\varphi(x)=[-\infty,f(x)]$ for each $x\in X$, is lower semi-continuous.
\end{Lem}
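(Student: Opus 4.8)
The plan is to reduce both implications to a single observation: for every $r\in\mathbb R$ one has $\varphi^{-1}[(r,+\infty]]=\{x\in X:f(x)>r\}$. Indeed, $\varphi(x)\cap(r,+\infty]\neq\emptyset$ says that the interval $[-\infty,f(x)]$ meets $(r,+\infty]$, i.e.\ contains some point strictly above $r$, and this happens precisely when $f(x)>r$. Before using this I would record that $\varphi$ really maps into $\mathcal F(\overline{\mathbb R})$: each value $\varphi(x)=[-\infty,f(x)]$ is nonempty (it contains $-\infty$) and closed in $\overline{\mathbb R}$.

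For the ``if'' direction, assume $\varphi$ is lower semi-continuous. Fix $r\in\mathbb R$ and apply the definition of lower semi-continuity to the open set $(r,+\infty]$: the set $\varphi^{-1}[(r,+\infty]]$ is open, and by the identity just noted it equals $\{x\in X:f(x)>r\}$. Hence $f$ is lower semi-continuous. This half is essentially immediate.

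For the ``only if'' direction, assume $f$ is lower semi-continuous. Since $\varphi^{-1}[\cdot]$ commutes with unions, i.e.\ $\varphi^{-1}[\bigcup_i W_i]=\bigcup_i\varphi^{-1}[W_i]$, it suffices to verify that $\varphi^{-1}[W]$ is open for each member $W$ of the base $\{[-\infty,a),(a,b),(a,+\infty]:a,b\in\mathbb R\}$. I would treat the three shapes in turn. Because $-\infty\in\varphi(x)$ for every $x$, the set $\varphi^{-1}[[-\infty,a)]$ is all of $X$. For $W=(a,b)$ or $W=(a,+\infty]$ the same computation as above gives $\varphi^{-1}[W]=\{x\in X:f(x)>a\}$, which is open by lower semi-continuity of $f$. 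Thus every basic preimage is open, and therefore $\varphi$ is lower semi-continuous.

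The only subtlety worth flagging is that, unlike for single-valued continuity, lower semi-continuity of a set-valued map cannot be checked on a mere subbase: $\varphi^{-1}[\cdot]$ does not commute with intersections, so the argument must be organised around a base rather than a subbase. This is why the three basic interval types are each inspected separately, including the harmless case $[-\infty,a)$ arising from the fixed left endpoint $-\infty$. Everything else is a routine translation between the inequality $f(x)>r$ and the nonempty-intersection condition defining $\varphi^{-1}$.
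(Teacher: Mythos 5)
Your proof is correct, and its forward half is organized differently from the paper's. The backward implication ($\varphi$ lower semi-continuous $\Rightarrow$ $f$ lower semi-continuous) coincides with the paper's argument: both apply $\varphi^{-1}$ to the open set $(r,+\infty]$ and use the identity $\varphi^{-1}\bigl[(r,+\infty]\bigr]=\{x\in X:f(x)>r\}$. For the forward implication, however, you first reduce to the base $\{[-\infty,a),(a,b),(a,+\infty]:a,b\in\mathbb R\}$ via the fact that $\varphi^{-1}[\cdot]$ commutes with unions, and then compute each basic preimage globally: $\varphi^{-1}\bigl[[-\infty,a)\bigr]=X$ (since $-\infty\in\varphi(x)$ always) and $\varphi^{-1}\bigl[(a,b)\bigr]=\varphi^{-1}\bigl[(a,+\infty]\bigr]=\{x\in X:f(x)>a\}$, each open by hypothesis. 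The paper instead fixes an arbitrary open $V$ and a witness $a\in\varphi(x)\cap V$, and splits into the cases $a=+\infty$, $a=-\infty$, $a\in\mathbb R$ (the last with two sub-cases), in each case exhibiting a neighborhood of $x$ of the form $\{y:f(y)>s\}$ or $X$ on which $\varphi(\cdot)$ still meets $V$. The two case analyses mirror each other, but your base reduction replaces the pointwise neighborhood-hunting with three one-line preimage identities, yielding a shorter and more transparent argument; what it costs is having to invoke (and, as you correctly do, justify) the principle that set-valued lower semi-continuity can be verified on a base --- and your remark that this works for a base but \emph{not} for a subbase, because $\varphi^{-1}[\cdot]$ does not commute with intersections, is exactly the right point to flag. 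The paper's direct verification never needs this principle, which is presumably why it proceeds pointwise over arbitrary open sets.
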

\begin{proof}
Let $V$ be an open subset of $\overline{\mathbb R}$ and $\varphi(x)\cap V\neq\emptyset$. Choose $a\in\varphi(x)\cap V$. Then $a\leq f(x)$.

Case 1. $a=+\infty$. Then $f(x)=+\infty$. Since $+\infty=a\in V$, there exists $r\in\mathbb{R}$ such that $(r,+\infty]\subset V$. Let $U=\{x\in X:f(x)>r\}$. Since
$f$ is lower semi-continuous and $f(x)=+\infty$, $U$ is an open neighborhood of $x$. For each $y\in U$, $f(y)>r$ and thus $[-\infty,f(y)]\cap(r,+\infty]\neq
\emptyset$ from which it follows that $\varphi(y)\cap V\neq\emptyset$.

Case 2. $a=-\infty$. Then $-\infty\in V$. For each $y\in X$, $-\infty\in\varphi(y)\cap V$.

Case 3. $a\in\mathbb R$. Then there exist $s,r\in\mathbb{R}$ with $s<r$ such that $a\in(s,r)\subset V$. Let $U=\{x\in X:f(x)>s\}$. Since $f$ is lower semi-continuous
and $f(x)>s$, $U$ is an open neighborhood of $x$. For each $y\in U$, $f(y)>s$.

Case 3.1. $f(y)\geq r$. Then $(s,r)\subset[-\infty,f(y)]$ and so $(s,r)\subset\varphi(y)\cap V$.

Case 3.2. $f(y)<r$. Then $f(y)\in(s,r)\subset V$ and so $f(y)\in\varphi(y)\cap V$.

The above argument shows that $\varphi$ is lower semi-continuous.

Conversely, let $r\in\mathbb{R}$ and $f(x)>r$. Then $\varphi(x)\cap(r,\infty]\neq\emptyset$. Since $\varphi$ is lower semi-continuous, $U=\varphi^{-1}[(r,\infty
]]$ is an open neighborhood of $x$. For each $y\in U$, $[-\infty,f(y)]\cap(r,\infty]\neq\emptyset$ which implies that $f(y)>r$. Therefore, $f$ is lower
semi-continuous.
\end{proof}

For the space $\overline{\mathbb R}$, if we set $B_n=[-\infty,n]\cup\{+\infty\}$ for each $n\in\mathbb{N}$, then $\{B_n\}_{n\in\mathbb{N}}$ is a strictly
increasing closed cover of $\overline{\mathbb R}$. Thus as a special case, the results in the previous sections hold for $Y=\overline{\mathbb R}$. Then with the
above lemmas, we can translate the set-valued maps with values in $\mathcal F(\overline{\mathbb R})$ of the corresponding results into generalized real-valued
functions. We take stratifiable spaces as an example. The other results can be stated and proved analogously.

\begin{Thm}\label{T13}
A space $X$ is stratifiable if and only if for each $F\in\tau^c$, there exists a lower semi-continuous function $f_F:X\rightarrow\overline{\mathbb R}$ such that
$(1)$ $F=f_F^{-1}(+\infty)$; $(2)$ if $F_1\subset F_2$, then $f_{F_1}\leq f_{F_2}$; $(3)$ if $x\notin F\in\tau^c$, then there exists an open
neighborhood $V$ of $x$ and $m\in\mathbb{N}$ such that $f_F(y)\leq m$ for each $y\in V$.
\end{Thm}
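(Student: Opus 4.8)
The plan is to deduce this theorem from the set-valued characterization of stratifiable spaces in Theorem \ref{T5}, specialized to $Y=\overline{\mathbb R}$ with the strictly increasing closed cover $B_n=[-\infty,n]\cup\{+\infty\}$, and to pass between set-valued maps and generalized real-valued functions via Lemmas \ref{L3} and \ref{L4}. The observation that makes the dictionary work is that $\overline{\mathbb R}\setminus B_n=(n,+\infty)$, so $\phi_F^{-1}[\overline{\mathbb R}\setminus B_n]=\{x:\phi_F(x)\cap(n,+\infty)\neq\emptyset\}$, and a point lies in $\bigcap_n\phi_F^{-1}[\overline{\mathbb R}\setminus B_n]$ exactly when $\phi_F(x)$ meets arbitrarily large reals, i.e. exactly when $\sup(\phi_F(x)\setminus\{+\infty\})=+\infty$.

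For the necessity, I would start from a stratifiable $X$ and invoke Theorem \ref{T5} to get a map $\phi:\tau^c\rightarrow\mathcal L(X,\mathcal F(\overline{\mathbb R}))$ satisfying $(G_{\tau^c})$, $(m_{\tau^c})$ and $(\diamond)$. For each $F$ I set $f_F(x)=\sup(\phi_F(x)\setminus\{+\infty\})$; by Lemma \ref{L3} each $f_F$ is lower semi-continuous. Condition $(1)$ then follows from $(G_{\tau^c})$ by the observation above: $x\in F$ iff $\phi_F(x)$ meets every $(n,+\infty)$ iff $f_F(x)=+\infty$. Condition $(2)$ is immediate from $(m_{\tau^c})$, since $\phi_{F_1}(x)\subset\phi_{F_2}(x)$ forces the suprema to compare. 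Condition $(3)$ comes from $(\diamond)$: local boundedness of $\phi_F$ at $x\notin F$ yields a neighborhood $V$ with $V\subset\phi_F^{\#}[B_m]$, i.e. $\phi_F(y)\subset[-\infty,m]\cup\{+\infty\}$ for $y\in V$, whence $f_F(y)\leq m$.

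For the sufficiency, given the family $\{f_F\}$, I would use Lemma \ref{L4} to define $\phi_F(x)=[-\infty,f_F(x)]\in\mathcal F(\overline{\mathbb R})$, which is lower semi-continuous, so $\phi:\tau^c\rightarrow\mathcal L(X,\mathcal F(\overline{\mathbb R}))$. Here $[-\infty,f_F(x)]\cap(n,+\infty)\neq\emptyset$ iff $f_F(x)>n$, so $\phi_F^{-1}[\overline{\mathbb R}\setminus B_n]=\{x:f_F(x)>n\}$ and $(G_{\tau^c})$ reduces to $F=\{x:f_F(x)=+\infty\}$, which is $(1)$. Monotonicity $(m_{\tau^c})$ follows from $(2)$, since $f_{F_1}\leq f_{F_2}$ gives $[-\infty,f_{F_1}(x)]\subset[-\infty,f_{F_2}(x)]$, and $(\diamond)$ follows from $(3)$ because $f_F(y)\leq m$ on $V$ gives $\phi_F(y)\subset B_m$ there. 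An application of Theorem \ref{T5} then shows $X$ is stratifiable.

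Both directions are essentially dictionary translations, so no direction is genuinely hard; the only point requiring care is the handling of $+\infty$ in condition $(1)$. The main obstacle is to keep the interval bookkeeping exactly right, namely that $\overline{\mathbb R}\setminus B_n$ is the half-line $(n,+\infty)$ \emph{not} containing $+\infty$, and that the supremum in Lemma \ref{L3} (which discards $+\infty$) correctly detects membership in $F$ through $(G_{\tau^c})$; conflating $(n,+\infty)$ with $(n,+\infty]$, or mishandling the degenerate case $\phi_F(x)=\{+\infty\}$, would break the equivalence. Once this is pinned down, the verification of the three conditions against $(G_{\tau^c})$, $(m_{\tau^c})$, $(\diamond)$ is routine.
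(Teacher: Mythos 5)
Your proposal is correct and follows essentially the same route as the paper's own proof: specialize Theorem \ref{T5} to $Y=\overline{\mathbb R}$ with $B_n=[-\infty,n]\cup\{+\infty\}$, then pass from $\phi_F$ to $f_F(x)=\sup(\phi_F(x)\setminus\{+\infty\})$ via Lemma \ref{L3} for necessity and from $f_F$ to $\phi_F(x)=[-\infty,f_F(x)]$ via Lemma \ref{L4} for sufficiency. The bookkeeping point you flag --- that $\overline{\mathbb R}\setminus B_n=(n,+\infty)$ excludes $+\infty$, so the truncated supremum still detects membership in $F$ --- is exactly the point the paper's proof also takes care of.
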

\begin{proof}
For each $n\in\mathbb{N}$, let $B_n=[-\infty,n]\cup\{+\infty\}$. Then $\{B_n\}_{n\in\mathbb{N}}$ is a strictly increasing closed cover of $\overline{\mathbb R}$.
By letting $Y=\overline{\mathbb R}$ in Theorem \ref{T5}, we have the following.

{\bf Claim}. A space $X$ is stratifiable if and only if for each $F\in\tau^c$, there exists a lower semi-continuous map $\phi_F:X\rightarrow\mathcal F(\overline
{\mathbb R})$ such that $(1')$ $F=\bigcap_{n\in\mathbb{N}}\phi_F^{-1}[(n,+\infty)]$; $(2')$ if $F_1\subset F_2$, then $\phi_{F_1}\subset\phi_{F_2}$; $(3')$ if $x
\notin F\in\tau^c$, then there exists an open neighborhood $V$ of $x$ and $m\in\mathbb{N}$ such that $\phi_F(y)\subset B_m$ for each $y\in V$.

Let $X$ be a stratifiable space and $\phi_F$ be the map in the above claim for each $F\in\tau^c$. For each $x\in X$, let $f_F(x)=\sup(\phi_F(x)\setminus\{+\infty
\})$. By Lemma \ref{L3}, $f_F:X\rightarrow\overline{\mathbb R}$ is lower semi-continuous. It is clear that $(2)$ holds. By $(1')$, if $x\in F$, then $\phi_F(x)
\cap(n,+\infty)\neq\emptyset$ for each $n\in\mathbb{N}$. Choose $x_n\in\phi_F(x)\cap(n,+\infty)$ for each $n\in\mathbb{N}$. Then $\sup\{x_n:n\in\mathbb{N}\}=+
\infty$ and $\{x_n:n\in\mathbb{N}\}\subset\phi_F(x)\setminus\{+\infty\}$ from which it follows that $f_F(x)=+\infty$. On the other hand, if $f_F(x)=+\infty$,
then $(\phi_F(x)\setminus\{+\infty\})\cap(n,+\infty)\neq\emptyset$ and so $\phi_F(x)\cap(n,+\infty)\neq\emptyset$ for each $n\in\mathbb{N}$. This implies
that $x\in\bigcap_{n\in\mathbb{N}}\phi_F^{-1}[(n,+\infty)]$. By $(1')$, $x\in F$.

Now, suppose that $x\notin F\in\tau^c$. Then by $(3')$, there exists an open neighborhood $V$ of $x$ and $m\in\mathbb{N}$ such that $\phi_F(y)\subset B
_m$ for each $y\in V$. Thus $\phi_F(y)\setminus\{+\infty\}\subset[-\infty,m]$ and so $f_F(y)\leq m$.

Conversely, for each $F\in\tau^c$, define a map $\phi_F:X\rightarrow\mathcal F(\overline{\mathbb R})$ by letting $\phi_F(x)=[-\infty,f_F(x)]$ for each $x\in X$.
By Lemma \ref{L4}, $\phi_F$ is lower semi-continuous. It is clear that if $F_1\subset F_2$, then $\phi_{F_1}\subset\phi_{F_2}$. If $x\in F$, then $\phi_F(x)=
\overline{\mathbb R}$ and thus $x\in\bigcap_{n\in\mathbb{N}}\phi_F^{-1}[(n,+\infty)]$. If $x\in\bigcap_{n\in\mathbb{N}}\phi_F^{-1}[(n,+\infty)]$, then $\phi_F(x)
\cap(n,+\infty)\neq\emptyset$ for each $n\in\mathbb{N}$. It follows that $f_F(x)=+\infty$ and thus $x\in F$.

Suppose that $x\notin F\in\tau^c$. Then there exists an open neighborhood $V$ of $x$ and $m\in\mathbb{N}$ such that $f_F(y)\leq m$ for each
$y\in V$. Thus $\phi_F(y)\subset B_m$. By the above claim, $X$ is a stratifiable space.
\end{proof}

\end{document}